\documentclass[11pt,a4paper,reqno]{amsart}
\usepackage[T1]{fontenc}
\usepackage{lmodern}
\usepackage[a4paper,margin=25mm]{geometry}
\usepackage{amsmath,amssymb,mathtools,mathrsfs}
\usepackage{microtype,needspace,etoolbox}
\usepackage[hidelinks]{hyperref}
\numberwithin{equation}{section}
\newtheorem{maintheorem}{Theorem}

\newtheorem{lemma}{Lemma}[section]
\newtheorem{obataidentity}{Theorem}

\newtheorem{proposition}[lemma]{Proposition}
\theoremstyle{remark}

\DeclareMathOperator{\Ric}{Ric}
\DeclareMathOperator{\tr}{tr}
\DeclareMathOperator{\Div}{div}
\newcommand{\ip}[2]{\langle #1,#2\rangle}
\newcommand{\dv}{\,\mathrm{d}v_g}
\newcommand{\dvo}{\,\mathrm{d}v_{g_0}}
\newcommand{\Dw}{\Div_{u,g}}
\newcommand{\Ac}{\mathcal A}
\newcommand{\uloggrad}{\nabla_g\log u}

\allowdisplaybreaks[4]
\hypersetup{pdftitle={Rigidity of conformal metrics with constant sixth-order Q-curvature},pdfauthor={Daowen Lin and Wangzhe Wu}}
\title[Sixth-order $Q$-curvature rigidity]{Rigidity of conformal metrics with constant sixth-order $Q$-curvature}
\author{Daowen Lin}
\address[Daowen Lin]{Geometric Partial Differential Equations Unit, Okinawa Institute of
Science and Technology Graduate University, Okinawa, Japan}
\email{daowen.lin@oist.jp}
\author{Wangzhe Wu}
\address[Wangzhe Wu]{School of Mathematics and Statistics, Ningbo University, Ningbo,
Zhejiang, People's Republic of China}
\email{wuwz18@mail.ustc.edu.cn}
\date{}
\subjclass[2020]{Primary 53C21; Secondary 53A30, 35J35, 35B50}
\keywords{Sixth-order $Q$-curvature, Einstein metric, conformal rigidity, Obata identity}
\newcommand{\manuscriptspacing}{%
  \setlength{\abovedisplayskip}{5pt plus 2pt minus 1.5pt}%
  \setlength{\belowdisplayskip}{5pt plus 2pt minus 1.5pt}%
  \setlength{\abovedisplayshortskip}{2pt plus 1pt minus 1pt}%
  \setlength{\belowdisplayshortskip}{3pt plus 1pt minus 1pt}%
  \setlength{\jot}{1.8pt}%
  \setlength{\parskip}{0pt}%
  \predisplaypenalty=10000
  \postdisplaypenalty=0
}
\apptocmd{\normalsize}{\manuscriptspacing}{}{}
\AtBeginDocument{\manuscriptspacing}
\makeatletter
\patchcmd{\@maketitle}{\dimen@34\p@}{\dimen@24\p@}{}{}
\patchcmd{\@setauthors}{\@topsep30\p@}{\@topsep24\p@}{}{}
\patchcmd{\section}{\@plus\linespacing}{\@plus2\p@}{}{}
\patchcmd{\subsection}{\@plus.7\linespacing}{\@plus1\p@}{}{}
\makeatother
\begin{document}
\begin{abstract}
Let $(M^n,g_0)$ be a closed connected Einstein manifold with positive scalar
curvature, where $n\ge6$. Using an Obata-type identity, we prove that every
smooth metric conformal to $g_0$ with constant sixth-order $Q$-curvature
is Einstein.
\end{abstract}
\maketitle

\section{Introduction}

Let $(M^n,g_0)$ be a closed Einstein manifold with positive scalar curvature.
Obata~\cite{Obata1971} proved that every metric conformal to $g_0$ with
constant scalar curvature is Einstein. In particular, the conformal factor
is constant unless $(M,g_0)$ is a round sphere up to scale.

The conformally covariant powers of the Laplacian constructed by
Graham--Jenne--Mason--Sparling~\cite{GJMS} and their associated
$Q$-curvatures lead to higher-order versions of this problem.
An earlier form of the fourth-order Obata-type integral identity already
appeared in Gursky's 1997 work~\cite{Gursky1997} on the round four-sphere;
see Li--Wei~\cite{LiWei} for this connection.
V\'etois~\cite{Vetois} subsequently established rigidity for constant
fourth-order $Q$-curvature in positive Einstein conformal classes,
combining an Obata-type identity with a scalar-curvature positivity
argument based on Gursky--Malchiodi~\cite{GurskyMalchiodi}.
Case~\cite{Case} gave a geometric formulation of this approach and extended
it to mixed fourth-order curvatures; see also Li--Wei~\cite{LiWei}
for related identities and applications.

The extension of the Obata--V\'etois uniqueness theorem to higher-order
$Q$-curvatures is explicitly identified as an open problem by
Case and Gover~\cite[Section~4.2]{CaseGover}.
Theorem~A resolves its sixth-order case on closed positive Einstein
manifolds in dimensions $n\ge 6$.
The result requires no local conformal flatness assumption, and positivity
of the scalar curvature of the target metric is a consequence of the equation.

We write $R_g$ for the scalar curvature and $A_g$ for the Schouten tensor.
The fourth- and sixth-order curvatures are denoted by $Q_{4,g}$ and $Q_{6,g}$,
respectively. Our normalization is
\begin{equation}\label{eq:normalization}
 P_{6,g}(1)=\frac{n-6}{2}Q_{6,g},\qquad
 \Delta_g f=\tr_g(\nabla_g^2f).
\end{equation}
The local formula and the Einstein factorization of $P_{6,g_0}$ are taken
from Chen--Hou~\cite{ChenHou}. Our main result is the following.

\begin{maintheorem}\label{thm:main}
Let $(M^n,g_0)$ be a smooth closed connected Einstein manifold, where
$n\ge6$ and $R_{g_0}>0$. Suppose that $g=u^2g_0$, with $u$ smooth and
positive, has constant sixth-order $Q$-curvature $Q_{6,g}=\Lambda$.
Then $g$ is Einstein and $R_g>0$. Furthermore, $g=a^2g_0$ for a constant
$a>0$, unless $(M,g_0)$ is a round sphere up to scale. In the round case,
$g=a^2\phi^*g_0$ for a conformal diffeomorphism $\phi$. In either case,
\begin{equation}\label{eq:Einstein-Q6}
 \Lambda=\frac{(n^2-4)(n^2-16)}{32n^2(n-1)^3}R_g^3>0.
\end{equation}
\end{maintheorem}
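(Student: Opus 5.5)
We follow the Obata--Vétois strategy: first derive an Obata-type integral identity for $P_{6}$, then show that positivity of $R_g$ makes every term in it nonnegative. Normalize $g_0$ so that $\Ric_{g_0}=(n-1)g_0$. By the Chen--Hou factorization,
\[
P_{6,g_0}=\prod_{k=1}^{3}\Bigl(-\Delta_{g_0}+c_k\Bigr),\qquad c_k=\frac{(n+2k-2)(n-2k)}{4}>0 .
\]
With $u=v^{2/(n-6)}$, the hypothesis reads $P_{6,g_0}v=\tfrac{n-6}{2}\Lambda\,v^{(n+6)/(n-6)}$. Here $u=v^{2/(n-6)}$ is only meaningful for $n>6$. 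For $n=6$ we would instead work with the critical-dimension equation in $\log u$; the identity below is written in terms of $g$, so it should be insensitive to which normalization is used.

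The core step is the identity. We write everything on $g=u^2g_0$ and use the trace-free Ricci tensor $E_g=\Ric_g-\tfrac{R_g}{n}g$. The goal is a weighted identity
\[
\int_M u^{\alpha}\Bigl(\,|E_g|_g^2\,\Phi_1+|\nabla_g E_g|^2\,\Phi_2+\ldots\Bigr)\dv=0,
\]
in which the weights $\Phi_i$ are built from $R_g$ and from invariants of $u$. To produce it, take the sixth-order analogue of Vétois's multiplier. That means contracting the divergence of the "$Q_6$-Einstein tensor" against $\nabla_g\log u$; equivalently, pair $\nabla Q_{6,g}=0$ with the conformal Killing operator applied to $\nabla\log u$. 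Because $g_0$ is Einstein, $E_g=-(n-2)u\,\bigl(\nabla^2_{g}u^{-1}\bigr)^{\circ}$-type formulas hold, so $E_g$ is a trace-free Hessian of a power of $u$ with respect to $g$. This is what lets us integrate by parts with the weighted divergence $\Dw$. We would organize the computation by writing $Q_{6,g}$ in terms of $R_g$, $|E_g|^2$, $\Delta_g R_g$ and the Bach-type terms. On $g$, the Weyl contributions collapse to $W_{g_0}$ contracted with $E_g$. Ideally these combine into nonnegative quantities such as $|W\cdot E|^2$, or cancel altogether, which is why no local conformal flatness should be needed.

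Next, we prove $R_g>0$ independently, via a Gursky--Malchiodi type argument. The operator factors as a product of second-order operators $-\Delta+c_k$, each with positive constant. So we would set $v_1=(-\Delta+c_3)v$ and similarly for the others, and apply the maximum principle successively, using that $\Lambda$ has the right sign. We first need to show $\Lambda>0$ by integrating the equation against $v$ and using positivity of the operator. From this we deduce that the intermediate conformally covariant quantities are positive, hence that $Q_{4,g}>0$ and $R_g>0$. Alternatively, we could use a continuity/deformation argument along $g_t=u^{2t}g_0$.

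With $R_g>0$, and since $Q_4$ positivity holds as well, each weight $\Phi_i$ is positive. The identity then forces $E_g\equiv0$, so $g$ is Einstein. Since $E_g\equiv0$, the trace-free Hessian of $u^{-1}$ vanishes, and Obata's theorem gives the dichotomy: either $u$ is constant, or $(M,g_0)$ is a round sphere and $g=a^2\phi^*g_0$. Finally, \eqref{eq:Einstein-Q6} follows by evaluating $Q_6$ on an Einstein metric using the factorization constants: $\tfrac{n-6}{2}Q_6=c_1c_2c_3$ with $R$ rescaled. The main obstacle is the identity itself, that is, arranging the sixth-order terms (including $\nabla E$ and the Weyl terms) into a sign-definite weighted sum. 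We would attempt this first in the locally conformally flat case and then track the Weyl terms.
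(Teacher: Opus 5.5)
\textbf{There is a genuine gap: the central identity is never constructed.} Your outline has the right architecture: an Obata-type identity plus a Gursky--Malchiodi positivity argument. But you only state the goal of a weighted identity with positive weights, and you yourself call it ``the main obstacle.'' That identity is essentially the whole proof, and a generic multiplier does not produce it. The paper needs three specific devices:
\begin{itemize}
\item A weighted Green identity $\Dw\bigl(\psi\nabla_g f-\frac fu\nabla_g(u\psi)\bigr)=\psi\Delta_g f-\frac fu\Delta_g(u\psi)$, with $f=\Delta_gJ+4|E|^2$ and $\psi=\frac{\Delta_gu}{u}-\frac{n(n-4)}{8(n-1)}(J-\frac n2)$. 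The first term of $\psi$ cancels the undifferentiated $Q_{6,g}$. The second produces the source $(J-\frac n2)(Q_{6,g}-Q_{6,g_0})$, which is why $\Lambda$ must first be shown positive and rescaled to $Q_{6,g_0}$.
\item The flux of the Bach-corrected Newton tensor $\mathbb N=E^2-\frac{|E|^2}{2}g+\frac{B}{n-4}$, which cancels exactly the sign-indefinite terms $16\tr_g(E^3)+\frac{16}{n-4}\ip{E}{B}$ in $Q_{6,g}$. Your hope that the Weyl terms become something like $|W\cdot E|^2$ does not work out. On an Einstein background, $C=-W(\cdot,\cdot,\cdot,\nabla_g\log u)$ and $B=(n-4)W(\nabla_g\log u,\cdot,\nabla_g\log u,\cdot)$. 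So $\ip{E}{B}$ has no sign and must be cancelled, not estimated.
\item A further tensor correction $\mathbb T_n$ and an explicit sum-of-squares decomposition of the resulting scalar $F$, giving $\frac8nF\ge12|E|^2\Theta_u+|\nabla_gJ|^2/u^2$.
\end{itemize}
Without these, the core claim is unproved.

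\textbf{The positivity input is also misidentified.} The coercive weight is not controlled by $R_g>0$ and $Q_{4,g}>0$. It is $\Theta_u$ from \eqref{eq:Theta}, which contains the negative term $-\frac{2u^2}{(n-4)(u^2J+2+2|\nabla_gu|^2)}|\nabla_gJ-4E(\nabla_g\log u)+\frac{n-4}{n}J\nabla_g\log u|^2$. Its sign comes from recognizing $\Theta_u$ as a positive multiple of the scalar curvature of the auxiliary metric $\widehat g=v^{4/(n-4)}g_0$, where $v=\frac2{n-6}L_3(u^{(n-6)/2})$.

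Your maximum-principle chain also does not give what you claim. It yields $L_2L_3(u^{(n-6)/2})>0$ and $L_3(u^{(n-6)/2})>0$. These say that $Q_{4,\widehat g}>0$ and that $u^2J+2+2|\nabla_gu|^2>0$, which is strictly weaker than $J>0$. By contrast, $Q_{4,g}$ is governed by $L_1L_2(u^{(n-4)/2})$, on which the sixth-order equation gives no a priori control.

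To obtain $J>0$ and $\Theta_u>0$, the paper runs a continuity argument along $u_t=(1-t+tu^{(n-6)/2})^{2/(n-6)}$, or $u_t=u^t$ when $n=6$, where the forcing comes from the logarithmic law. Positivity of $Q_{4,\widehat g_t}$ keeps $J_{\widehat g_t}>0$, and that in turn rules out a first zero minimum of $J_t$. Your ``alternative'' deformation along $u^{2t}g_0$ points in this direction, but it only works with exactly this auxiliary metric.

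Two minor points. First, the correct formula is $(n-2)E=-(n-2)u^{-1}(\nabla_g^2u)^\circ$; it is not a $g$-Hessian of $u^{-1}$. Second, at $n=6$ the relation $\frac{n-6}{2}Q_6=c_1c_2c_3$ degenerates, so the final constant should be read off from the local formula for $Q_6$ with $J$ constant.
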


In the round conformal class, the conclusion follows from the classification
of entire solutions by Wei--Xu~\cite{WeiXu} when $n>6$ and from the spherical
uniqueness theorem of Chang--Yang~\cite{ChangYang} when $n=6$.
In dimension six, finite volume on $\mathbb R^6$ alone does not force an
entire solution to be spherical; see
Martinazzi~\cite{MartinazziClassification,MartinazziVolume} and
Hyder--Martinazzi~\cite{HyderMartinazzi}. Smooth compactification is
therefore essential when passing from the Euclidean equation to the
closed-manifold problem.

The passage from fourth to sixth order introduces higher derivatives of
the scalar curvature, cubic curvature terms, and contractions with the
Bach tensor. These terms have no manifest sign, so obtaining an
Obata-type identity with a coercive remainder is a central difficulty.
On a general Einstein background, the Weyl curvature must also be retained.
We combine a weighted Green identity with a Bach-corrected Newton tensor
to group the highest-order terms and cancel the cubic and Bach
contractions. A further tensor correction yields a remainder controlling
the trace-free Schouten tensor. The positivity needed for this estimate
is derived from the sixth-order equation: the Einstein factorization and
an auxiliary conformal metric imply positivity of the scalar curvature
and of the function $\Theta_u$ defined in \eqref{eq:Theta}.
In the critical dimension six, the logarithmic curvature transformation
law is treated by a separate conformal path. Together, these ingredients
yield an integral identity forcing the trace-free Schouten tensor to vanish.

Section~\ref{sec:obata-first} states the identity and deduces Theorem~A.
Sections~\ref{sec:conformal} and~\ref{sec:positivity} establish the conformal
formulas and positivity. Sections~\ref{sec:construction}
and~\ref{sec:uniform-closure} construct the vector field $X$ and prove
the estimate for the scalar function $F$.

\section{The Obata identity and its consequence}\label{sec:obata-first}

Throughout the paper, $g_0$ is the background metric and $g=u^2g_0$ is the
target metric. We write $\nabla_g$, $\Delta_g$, and $\Div_g$ for the connection,
Laplacian, and divergence of $g$; operators for other metrics carry their
metric subscript explicitly. Unless another metric is indicated, all norms,
inner products, traces, and index contractions are taken with respect to $g$.
In indexed formulas, $\nabla_i=(\nabla_g)_i$,
$u_i=\nabla_i u$, and $u_{ij}=\nabla_i\nabla_j u$; indices are raised using
$g$, and repeated indices are summed.

For a symmetric covariant two-tensor $T$, we use $T(v)$ for the vector determined by
$\langle T(v),w\rangle=T(v,w)$. Thus $T^2$ and $T^3$ denote composition
through $g^{-1}T$. We identify the one-form $\Div_gT$, whose components are
$\nabla^iT_{ij}$, with a vector using $g$.

After a constant rescaling of the background and the reciprocal rescaling
of $u$, we may assume
\begin{equation}\label{eq:background}
 \Ric_{g_0}=(n-1)g_0,\qquad g=u^2g_0.
\end{equation}
Set
\begin{equation}\label{eq:Schouten}
 A_g=\frac1{n-2}\left(\Ric_g-\frac{R_g}{2(n-1)}g\right),\qquad
 J=\tr_g A_g=\frac{R_g}{2(n-1)},\qquad
 E=A_g-\frac Jn g.
\end{equation}
The formulas proved in Lemma~\ref{lem:conformal} also give
\begin{align}
E&=-u^{-1}\left(\nabla_g^2u-\frac{\Delta_gu}{n}g\right),\notag\\
J&=\frac{n(1+|\nabla_g u|^2)}{2u^2}-\frac{\Delta_gu}{u}.
\label{eq:JS-from-u}
\end{align}
For a vector field $V$, define
\begin{equation}\label{eq:weighted-div}
 \Dw V=u^{-1}\Div_g(uV)=\Div_gV+\ip{\nabla_g\log u}{V}.
\end{equation}
Thus $\int_M u\Dw V\dv=0$ on a closed manifold.

\subsection{The sixth-order identity}
On a closed manifold, the classical Obata identity obtained from Case's identity with $T=E$ is
\begin{equation}\label{eq:classical-obata-integral}
 \int_M u|E|^2\dv
 =\frac{n-1}{n}\int_M\ip{\nabla_gu}{\nabla_gJ}\dv.
\end{equation}
For the sixth-order equation, the corresponding source terms involve
$Q_{6,g}$ and its gradient. The normalized background value is
\begin{equation}\label{eq:reference}
 Q_{6,g_0}=\frac{n(n^2-4)(n^2-16)}{32}.
\end{equation}
On the set $u^2J+2+2|\nabla_g u|^2>0$, define
\begin{align}
\Theta_u={}&-\Delta_gJ-4|E|^2+2\ip{\uloggrad}{\nabla_gJ}\notag\\
&+\frac{n-4}{2nu^2}J\left[n(1+|\nabla_g u|^2)+(n+2)u^2J\right]\notag\\
&-\frac{2u^2}{(n-4)(u^2J+2+2|\nabla_g u|^2)}
 \left|\nabla_gJ-4E(\nabla_g\log u)
                   +\frac{n-4}{n}J\nabla_g\log u\right|^2.
\label{eq:Theta}
\end{align}
By \eqref{eq:Theta-geometry} at $t=1$, $\Theta_u$ is a positive multiple
of the scalar curvature of the auxiliary metric $\widehat g_1$ defined
in \eqref{eq:auxiliary}.

\begin{obataidentity}[Sixth-order Obata identity]\label{thm:obata}\label{u:prop:coercivity}
Let $n\ge6$ and assume \eqref{eq:background}.
For every smooth positive $u$, the vector field $X$ and scalar function
$F$ constructed in \eqref{u:eq:vector-field}--\eqref{u:eq:scalar-function} satisfy
\begin{equation}\label{u:eq:source}
 \Dw X=F-\ip{\uloggrad}{\nabla_gQ_{6,g}}-\frac{n(n-4)}{8(n-1)} \left(J-\frac n2\right)(Q_{6,g}-Q_{6,g_0}).
\end{equation}
At every point where $J>0$ and $\Theta_u\ge0$,
\begin{equation}\label{u:eq:coercivity}
 \frac8nF\ge12|E|^2\Theta_u+\frac{|\nabla_gJ|^2}{u^2}.
\end{equation}
On a closed manifold, the integral form of the identity is
\begin{align}
\int_MuF\dv
 ={}&\int_M\ip{\nabla_gu}{\nabla_gQ_{6,g}}\dv\notag\\
&+\frac{n(n-4)}{8(n-1)}\int_Mu\left(J-\frac n2\right)
                       (Q_{6,g}-Q_{6,g_0})\dv.
\label{eq:obata-integral}
\end{align}
\end{obataidentity}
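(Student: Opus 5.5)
The plan is to run the Case/Vétois scheme at sixth order, proving the three assertions in the order: pointwise identity \eqref{u:eq:source}, integral form \eqref{eq:obata-integral}, coercivity \eqref{u:eq:coercivity}.

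\textbf{Step 1: express $Q_{6,g}$ through $u$.} Start from the Chen--Hou Einstein factorization
\[
P_{6,g_0}=\prod_{k=1}^{3}\Bigl(-\Delta_{g_0}+\tfrac{(n+2k-2)(n-2k)}{4}\Bigr).
\]
Write $Q_{6,g}$ via conformal covariance: $Q_{6,g}=\frac{2}{n-6}u^{-(n+6)/2}P_{6,g_0}(u^{(n-6)/2})$ for $n>6$, and the logarithmic law for $n=6$. Then rewrite the result in $g$-quantities using the formulas of Lemma~\ref{lem:conformal}, in particular \eqref{eq:JS-from-u}. This gives $Q_{6,g}$ as $-\Delta_g^2J$ plus lower-order terms polynomial in $J$, $E$, $\nabla_g\log u$, $u^{-2}$ and their derivatives. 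The Weyl and Bach contributions come only through the commutation formulas for $\nabla_g^2u$, since $g_0$ is Einstein. Taking one gradient gives $\nabla_gQ_{6,g}$.

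\textbf{Step 2: prove \eqref{u:eq:source} and its integral form.} I would construct $X$ as a weighted combination:
\begin{itemize}
\item a Green-type term $u^{-1}(\cdots)\nabla_g\Delta_gJ$ and $(\Delta_gJ)\nabla_g\log u$, which absorbs the fifth-order part of $\ip{\nabla_g\log u}{\nabla_gQ_{6,g}}$;
\item a Bach-corrected Newton tensor term, roughly $T_1(\nabla_gJ)$ with $T_1=E^2-\tfrac1n|E|^2g+cB$, arranged to cancel the cubic $\tr E^3$ and $\ip{B}{E}$ contractions;
\item a term $E(\nabla_g\log u)$ times quadratic factors.
\end{itemize}
The coefficients are fixed so that $\Dw X+\ip{\nabla_g\log u}{\nabla_gQ_{6,g}}$ contains no derivatives of order $\ge5$ and no sign-indefinite cubic or Bach terms. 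Whatever remains, after adding the constant-coefficient correction $\frac{n(n-4)}{8(n-1)}(J-\frac n2)(Q_{6,g}-Q_{6,g_0})$, is defined to be $F$; the normalization \eqref{eq:reference} makes the constant terms cancel. The identity \eqref{u:eq:source} is then a tensorial verification using the Bianchi identity $\Div_gE=\frac{n-1}{n}\nabla_gJ$ and the Ricci commutation formulas.

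Multiplying \eqref{u:eq:source} by $u$ and integrating, the term $\int u\Dw X=0$ vanishes, and $u\ip{\nabla_g\log u}{\cdot}=\ip{\nabla_gu}{\cdot}$. This yields \eqref{eq:obata-integral} immediately.

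\textbf{Step 3: coercivity \eqref{u:eq:coercivity}, the main obstacle.} Write $\frac8nF$ as a quadratic form in the variables $(\nabla_gJ,\ E(\nabla_g\log u),\ J\nabla_g\log u,\ \nabla_gE,\ E)$, with coefficients depending on $J$ and $|\nabla_g u|^2$. The intended mechanism is:
\begin{itemize}
\item Recognize $\Theta_u$ as built from the terms paired with $|E|^2$, since $-\Delta_gJ$ and $2\ip{\nabla_g\log u}{\nabla_gJ}$ appear multiplied by $|E|^2$.
\item Complete the square in the vector $\nabla_gJ-4E(\nabla_g\log u)+\frac{n-4}{n}J\nabla_g\log u$. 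This produces exactly the negative term in \eqref{eq:Theta} times $12|E|^2$, with weight $\frac{2u^2}{(n-4)(u^2J+2+2|\nabla_g u|^2)}$.
\item Control the leftover $|\nabla_gE|^2$-type terms by a Kato/refined inequality for the trace-free Codazzi-type part. Show the residual matrix is positive semidefinite when $J>0$ and $n\ge6$, leaving $\frac{|\nabla_gJ|^2}{u^2}$.
\end{itemize}
The extra tensor correction in $X$ is chosen precisely to make this residual form diagonalizable. I expect the hardest part to be tuning the free coefficients of $X$ so that the residual quadratic form is nonnegative uniformly in $n\ge6$. I would first try to reduce this to checking a few $2\times2$ determinants as polynomials in $n$.
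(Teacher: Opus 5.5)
Your integral step is correct, and the overall architecture (Green-type flux, Bach-corrected Newton tensor, absorption into $|E|^2\Theta_u$) resembles the paper's. But the proposal does not prove the statement, which concerns the specific $X$ and $F$ of \eqref{u:eq:vector-field}--\eqref{u:eq:scalar-function}. Declaring $F$ to be ``whatever remains'' makes \eqref{u:eq:source} a tautology and moves all the content into coercivity.

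Moreover, the cancellation mechanism you describe fails as stated. The terms $\tr_g(E^3)$ and $\ip{E}{B}$ carry no derivatives, so they can only come from Case's identity \eqref{eq:case-tensor-general} for a flux $T(\uloggrad)$. There $\nabla_g^2\log u=-E+\cdots$ yields $-\ip{T}{E}=-\tr_g(E^3)$ for $T=E^2$. The contraction $\ip{\Div_g(E^2)}{\uloggrad}$ yields $-\ip{E}{B}/(n-4)$, via the Einstein-background relations $C_{ijk}=-W_{ijk\ell}(\uloggrad)^\ell$ and $B_{jk}=(n-4)W_{jakb}(\uloggrad)^a(\uloggrad)^b$. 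A flux $T_1(\nabla_gJ)$ instead gives $\ip{T_1}{\nabla_g^2J}+\ip{\Div_gT_1}{\nabla_gJ}+T_1(\uloggrad,\nabla_gJ)$. This contains neither contraction and adds a term linear in the trace-free Hessian of $J$.

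The paper uses $-16\psi\,\mathbb N(\uloggrad)$ with the Green weight $\psi$ of \eqref{eq:Green-weight}, because the Green identity multiplies the cubic and Bach terms of \eqref{eq:Q6-div} by $\psi$. Note also that $B(\uloggrad,\cdot)=0$. The trace coefficient must be $-\frac12|E|^2$, not $-\frac1n|E|^2$, so that $\frac12\ip{\uloggrad}{\nabla_g|E|^2}$ from $\Div_g(E^2)$ cancels. (Minor: $Q_{6,g}=+\Delta_g^2J+\cdots$.)

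The main gap is Step~3. At a point, $\Delta_gJ$, the trace-free part of $\nabla_g^2J$, and the trace-free totally symmetric part of $\nabla_gE$ are free jet variables. So no pointwise lower bound can hold if $F$ contains terms linear in the last two without matching squares. A Kato inequality needs a positive $|\nabla_gE|^2$ term, which your construction does not exhibit. The paper's $X$ is designed so that $F$ involves only $\Delta_gJ$, $\nabla_gJ$, $E$, $\uloggrad$, $J$ and $(1+|\nabla_gu|^2)/u^2$. The proof of \eqref{u:eq:coercivity} is then an exact algebraic decomposition, in these steps:
\begin{enumerate}
\item Subtract $\frac{(n+2)(3n-4)}{2(n-1)}|E|^2\Theta_u$. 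This adjusts the $|E|^2\Delta_gJ$ and $|E|^4$ coefficients so that, with $(\Delta_gJ)^2$, they fit an exactly completed square in $\Delta_gJ+4|E|^2-\ip{\uloggrad}{\nabla_gJ}$. Note that $\Delta_gJ$ is missing from your list of variables.
\item The subtraction also produces the positive term $\frac{(n+2)(3n-4)}{(n-1)(n-4)}\frac{u^2|E|^2}{u^2J+2+2|\nabla_gu|^2}|V|^2$, where $V=\nabla_gJ-4E(\uloggrad)+\frac{n-4}{n}J\uloggrad$.
\item The part of that term with coefficient $\frac{3n}{n-4}$ is rewritten, via $|\Ac_E(V)|^2+\frac{2n}{n-1}|E(V)|^2=2|E|^2|V|^2$, as two tensor squares. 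Their arguments are shifted by $J+2(1+|\nabla_gu|^2)u^{-2}$ times suitable vectors, so the denominator cancels in the cross terms. Those cross terms match $|E|^2\ip{\uloggrad}{\nabla_gJ}$, $E(\nabla_gJ,\nabla_gJ)$, $E^2(\uloggrad,\nabla_gJ)$ and $E^3(\uloggrad,\uloggrad)$.
\item The leftover groups, weighted by $(J-\frac n2)^2$, $J$, $\frac{n(1+|\nabla_gu|^2)}{2u^2}$ and $\frac n2$, are checked nonnegative for all $n\ge6$.
\item The term $|\nabla_gJ|^2/u^2$ comes from $(1+|\nabla_gu|^2)|\nabla_gJ|^2-\ip{\nabla_gu}{\nabla_gJ}^2\ge|\nabla_gJ|^2$.
\item The hypothesis $\Theta_u\ge0$ is used only to discard the surplus $\frac{5n-8}{2(n-1)}|E|^2\Theta_u$.
\end{enumerate}
None of this appears in your plan, so the inequality is asserted rather than proved.
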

The stronger estimate \eqref{u:eq:collection} also retains a nonnegative
scalar square. The construction of the vector field and the proof of
Theorem~\ref{thm:obata} occupy Sections~\ref{sec:construction} and
\ref{sec:uniform-closure}.

\subsection{Deduction of the rigidity theorem}
We first deduce Theorem~A from Theorem~\ref{thm:obata} and the positivity
result in Proposition~\ref{prop:J-positive}, proved in Section~\ref{sec:positivity}.
\begin{proof}[Proof of Theorem~A]
Normalize the background by \eqref{eq:background}.
Lemma~\ref{lem:constant-Q-positive} gives $\Lambda>0$.
Since $Q_{6,a^2g}=a^{-6}Q_{6,g}$, the rescaling
\begin{align}
g&\longmapsto
 \left(\frac{32\Lambda}{n(n^2-4)(n^2-16)}\right)^{\frac{1}{3}}g,\notag\\
u&\longmapsto
 \left(\frac{32\Lambda}{n(n^2-4)(n^2-16)}\right)^{\frac{1}{6}}u
\label{eq:target-normalization}
\end{align}
makes $Q_{6,g}=Q_{6,g_0}$. We retain the notation $g,u$.

Proposition~\ref{prop:J-positive} gives $J,\Theta_u>0$.
Since $Q_{6,g}=Q_{6,g_0}$ is constant,
\eqref{eq:obata-integral} and \eqref{u:eq:coercivity} yield
\begin{equation}\label{eq:all-dimensional-coercivity}
 0=\frac8n\int_MuF\dv\ge12\int_Mu|E|^2\Theta_u\dv.
\end{equation}
Thus $E=0$, so $g$ is Einstein with positive scalar curvature.
In particular, $J$ is constant. Equations \eqref{eq:Q6-div} and
\eqref{eq:reference} then give $J=\frac n2$ and \eqref{eq:Einstein-Q6}.
The latter equality is preserved by constant rescaling.

Finally, Obata's conformal rigidity theorem~\cite{Obata1971}
implies that $g=a^2g_0$ for some $a>0$, unless $(M,g_0)$ is a round
sphere up to scale. In the round case, $g=a^2\phi^*g_0$ for a conformal
diffeomorphism $\phi$. Undoing \eqref{eq:target-normalization} only
changes the homothety constant.
\end{proof}

\section{Conformal preliminaries}\label{sec:conformal}

The fourth-order curvature is
\begin{equation}\label{eq:Q4}
 Q_{4,g}=-\Delta_gJ-2|A_g|^2+\frac n2J^2
     =-\Delta_gJ-2|E|^2+\frac{n^2-4}{2n}J^2.
\end{equation}

\begin{lemma}\label{lem:conformal}
Under \eqref{eq:background}, the formulas \eqref{eq:JS-from-u} hold, and
\begin{gather}
\nabla_g^2u=-uE+\frac{\Delta_gu}{n}g,\notag\\
\Delta_gu=\frac{n}{2u}(1+|\nabla_g u|^2)-uJ.
\label{eq:scale-Hessian}
\end{gather}
\begin{gather}
\nabla_i\nabla_j(\log u)=-E_{ij}+\frac{\Delta_gu}{nu}g_{ij}
              -\frac{(\nabla_i u)(\nabla_j u)}{u^2},\notag\\
\Div_gE=\frac{n-1}{n}\nabla_gJ.
\label{eq:log-Hessian}
\end{gather}
\begin{equation}\label{eq:u-scale-gradient}
\nabla_g\left(\frac{1+|\nabla_g u|^2}{u^2}\right)
 =-2E(\nabla_g\log u)
 -\left(\frac{2J}{n}+\frac{1+|\nabla_g u|^2}{u^2}\right)\nabla_g\log u.
\end{equation}
\begin{align}
&\Delta_g\left(\frac{1+|\nabla_g u|^2}{u^2}\right)
 +2\left\langle\nabla_g\log u,
       \nabla_g\left(\frac{1+|\nabla_g u|^2}{u^2}\right)\right\rangle\notag\\
&\qquad+2\ip{\nabla_g\log u}{\nabla_gJ}
 =2|E|^2+\frac2n J^2-\frac{n(1+|\nabla_g u|^2)^2}{2u^4}.
\label{eq:u-scale-laplacian}
\end{align}
Consequently,
\begin{gather}
\Dw(\nabla_g\log u)=\frac{\Delta_gu}{u},\notag\\
\Dw\bigl(E(\nabla_g\log u)\bigr)
   =\frac{n-1}{n}\ip{\nabla_g\log u}{\nabla_gJ}-|E|^2.
\label{eq:weighted-basic}
\end{gather}
\end{lemma}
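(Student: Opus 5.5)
The plan is to write $g_0=u^{-2}g$, so that $g_0$ is the conformal metric with factor $w=u^{-1}$ relative to $g$. Then I will use the standard transformation law for the Schouten tensor under $\tilde g=e^{2\varphi}g$:
\[
A_{\tilde g}=A_g-\nabla_g^2\varphi+d\varphi\otimes d\varphi-\tfrac12|\nabla_g\varphi|^2g ,
\]
applied with $\varphi=-\log u$. Since $\Ric_{g_0}=(n-1)g_0$, we have $A_{g_0}=\tfrac12 g_0=\tfrac1{2u^2}g$. Moreover
\[
\nabla^2\log u=\frac{\nabla^2u}{u}-\frac{du\otimes du}{u^2},
\]
so the $du\otimes du$ terms cancel and we obtain
\[
A_g=\tfrac{1}{2u^2}g-\frac{\nabla_g^2u}{u}+\frac{|\nabla_gu|^2}{2u^2}g.
\]
Taking the trace gives the formula for $J$ in \eqref{eq:JS-from-u}. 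Removing the trace gives $E=-u^{-1}(\nabla^2u-\tfrac{\Delta u}{n}g)$. These two identities are equivalent to \eqref{eq:scale-Hessian}. The first line of \eqref{eq:log-Hessian} then follows by substituting into the formula for $\nabla^2\log u$.

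For $\Div_gE=\frac{n-1}{n}\nabla J$, I would use the contracted Bianchi identity $\Div A=\nabla J$ directly. Then
\[
\Div E=\nabla J-\tfrac1n\nabla J .
\]
This holds for any metric, so no computation with $u$ is needed here.

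For \eqref{eq:u-scale-gradient}, I would set $\psi=(1+|\nabla u|^2)/u^2$ and differentiate:
\[
\nabla\psi=\frac{2\nabla^2u(\nabla u)}{u^2}-\frac{2\psi\nabla u}{u}.
\]
Inserting \eqref{eq:scale-Hessian} gives $\nabla^2u(\nabla u)=-uE(\nabla u)+\tfrac{\Delta u}{n}\nabla u$. Then I will use $\tfrac{\Delta u}{nu}=\tfrac{\psi}{2}-\tfrac Jn$. Collecting terms yields $-2E(\nabla\log u)+(\psi-\tfrac{2J}{n}-2\psi)\nabla\log u$, which is the claim.

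The weighted identities \eqref{eq:weighted-basic} follow from \eqref{eq:weighted-div}.
\begin{itemize}
\item For the first, $\Dw\nabla\log u=\Delta\log u+|\nabla\log u|^2=\Delta u/u$.
\item For the second, expand
\[
\Div(E(\nabla\log u))=\ip{\Div E}{\nabla\log u}+\ip{E}{\nabla^2\log u}.
\]
Use the log-Hessian formula, together with $\tr E=0$, to get $\ip{E}{\nabla^2\log u}=-|E|^2-E(\nabla\log u,\nabla\log u)$. The last term cancels against the extra term $\ip{\nabla\log u}{E(\nabla\log u)}$ in $\Dw$.
\end{itemize}

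The main obstacle is the Laplacian identity \eqref{eq:u-scale-laplacian}. I would obtain it by taking $\Dw$ of \eqref{eq:u-scale-gradient}, since the left side equals $\Dw\nabla\psi+\ip{\nabla\log u}{\nabla\psi}+2\ip{\nabla\log u}{\nabla J}$. Writing $\nabla\psi=-2E(\nabla\log u)-c\,\nabla\log u$ with $c=\tfrac{2J}{n}+\psi$, the two weighted identities give
\[
\Dw\nabla\psi=-\tfrac{2(n-1)}{n}\ip{\nabla\log u}{\nabla J}+2|E|^2-\ip{\nabla c}{\nabla\log u}-c\frac{\Delta u}{u}.
\]
Here $\nabla c=\tfrac2n\nabla J+\nabla\psi$. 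The $\nabla J$ coefficients then sum to $-\tfrac{2(n-1)}{n}-\tfrac2n+2=0$, as required. The $\ip{\nabla\psi}{\nabla\log u}$ terms also cancel. What remains is $2|E|^2-c(\tfrac n2\psi-J)$. Expanding this gives $2|E|^2+\tfrac2nJ^2-\tfrac n2\psi^2$, with the cross terms $\psi J$ cancelling. Careful bookkeeping of these cancellations is the only real work.
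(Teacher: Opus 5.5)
Your proposal is correct and follows essentially the same route as the paper. Both use the Schouten transformation law with $\omega=-\log u$ and $A_{g_0}=\frac12 g_0$, split into trace and trace-free parts, and get $\Div_gE$ from the contracted Bianchi identity. Both differentiate $(1+|\nabla_gu|^2)/u^2$ directly for \eqref{eq:u-scale-gradient}, and take a divergence of that formula, using \eqref{eq:weighted-basic}, to reach \eqref{eq:u-scale-laplacian}; your cancellation checks there are accurate.
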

\begin{proof}
For a smooth function $\omega$, the Schouten transformation law is
\begin{equation}\label{eq:Schouten-transform}
 A_{e^{2\omega}g}=A_g-\nabla_g^2\omega+d\omega\otimes d\omega
                  -\frac12|\nabla_g\omega|^2g.
\end{equation}
The Einstein condition \eqref{eq:background} gives $A_{g_0}=\frac12g_0$.
Taking $\omega=-\log u$ in \eqref{eq:Schouten-transform} therefore gives
\begin{equation}\label{eq:background-Schouten}
 \frac12u^{-2}g=A_g+u^{-1}\nabla_g^2u
                       -\frac12u^{-2}|\nabla_gu|^2g.
\end{equation}
Its trace and trace-free part, with \eqref{eq:Schouten}, give
\eqref{eq:scale-Hessian} and \eqref{eq:JS-from-u}. Dividing the first
equation in \eqref{eq:scale-Hessian} by $u$ gives the Hessian formula in
\eqref{eq:log-Hessian}. The contracted Bianchi identity applied to
\eqref{eq:Schouten} gives $\Div_gA_g=\nabla_gJ$, hence the divergence
formula in \eqref{eq:log-Hessian}.

The product rule and \eqref{eq:weighted-div}, with
\eqref{eq:log-Hessian} and $\tr_gE=0$, prove \eqref{eq:weighted-basic}.
Differentiating $\frac{1+|\nabla_gu|^2}{u^2}$ and substituting
\eqref{eq:scale-Hessian} gives \eqref{eq:u-scale-gradient}.
Taking its divergence and using \eqref{eq:log-Hessian},
\eqref{eq:weighted-basic}, and the second equation in
\eqref{eq:scale-Hessian} gives \eqref{eq:u-scale-laplacian}.
For later use, the transformation of the Laplacian is
\begin{equation}\label{eq:Laplacian-transform}
 \Delta_{u^2g_0}f=u^{-2}\left[\Delta_{g_0}f+
        (n-2)\ip{\nabla_{g_0}\log u}{\nabla_{g_0}f}_{g_0}\right].
\end{equation}
\end{proof}

\subsection{Case's Einstein-scale identity}
Case's identity~\cite[(2.5)]{Case} takes the following form.
\begin{lemma}\label{lem:case-scale}
For every symmetric two-tensor $T$,
\begin{equation}\label{eq:case-tensor-general}
\Dw\bigl(T(\nabla_g\log u)\bigr)
 =\ip{\Div_gT}{\nabla_g\log u}-\ip{T}{E}
                   +\frac{\Delta_gu}{nu}\tr_gT.
\end{equation}
Consequently, on a closed manifold,
\begin{equation}\label{eq:case-integrated}
0=\int_M u\left(
 \ip{\Div_gT}{\nabla_g\log u}-\ip{T}{E}
              +\frac{\Delta_gu}{nu}\tr_gT\right)\dv.
\end{equation}
\end{lemma}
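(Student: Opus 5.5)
The identity \eqref{eq:case-tensor-general} is a direct product-rule computation, so I will expand the left-hand side using the definition \eqref{eq:weighted-div}. Write $V=T(\nabla_g\log u)$, which in indices is $V_j=T_{ij}\nabla^i\log u$. Then
\[
\Dw V=\nabla^j\bigl(T_{ij}\nabla^i\log u\bigr)+T_{ij}\nabla^i\log u\,\nabla^j\log u .
\]
The Leibniz rule splits the divergence into two pieces, $(\nabla^jT_{ij})\nabla^i\log u$ and $T_{ij}\nabla^i\nabla^j\log u$. The first piece is $\ip{\Div_gT}{\nabla_g\log u}$ by the convention identifying $\Div_gT$ with a vector; symmetry of $T$ makes the choice of index in the divergence irrelevant.

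For the second piece I will substitute the Hessian formula \eqref{eq:log-Hessian} from Lemma~\ref{lem:conformal}:
\[
\nabla_i\nabla_j\log u=-E_{ij}+\frac{\Delta_gu}{nu}g_{ij}-\frac{u_iu_j}{u^2}.
\]
Contracting with $T^{ij}$ gives $-\ip{T}{E}+\frac{\Delta_gu}{nu}\tr_gT-T(\nabla_g\log u,\nabla_g\log u)$. The last term cancels exactly against the lower-order term $\ip{\nabla_g\log u}{V}=T(\nabla_g\log u,\nabla_g\log u)$ coming from the weight. This cancellation is the reason for using the weighted divergence $\Dw$, and after it \eqref{eq:case-tensor-general} follows.

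For the integrated form \eqref{eq:case-integrated}, multiply by $u$ and use $u\,\Dw V=\Div_g(uV)$. The divergence theorem on the closed manifold $M$ then gives $\int_M u\,\Dw V\dv=0$, as already noted after \eqref{eq:weighted-div}.

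There is no real obstacle here. The only points to check are the sign and normalization conventions: the Hessian formula relies on the normalization \eqref{eq:background}, through the Schouten transformation law \eqref{eq:Schouten-transform}, and that is already established in Lemma~\ref{lem:conformal}. As a consistency check, taking $T=g$ recovers the first formula in \eqref{eq:weighted-basic}, and taking $T=E$ together with $\Div_gE=\frac{n-1}{n}\nabla_gJ$ recovers the second.
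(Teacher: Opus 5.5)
Your proposal is correct and follows the paper's own proof. In both, you expand $\Dw$ by the product rule and substitute the log-Hessian formula \eqref{eq:log-Hessian}, so that the two quadratic terms in $\nabla_g\log u$ cancel; you then multiply by $u$ and integrate, using $u\,\Dw V=\Div_g(uV)$ on the closed manifold.
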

\begin{proof}
The product rule gives
\[
 \Dw\bigl(T(\nabla_g\log u)\bigr)=\ip{\Div_gT}{\nabla_g\log u}+\ip T{\nabla_g^2\log u}+T(\nabla_g\log u,\nabla_g\log u).
\]
Substitute \eqref{eq:log-Hessian}. The two quadratic terms in $\nabla\log u$ cancel, proving
\eqref{eq:case-tensor-general}. Multiplication by $u$ and integration give
\eqref{eq:case-integrated}. Taking $T=E$ and using
$\Div_gE=\frac{n-1}{n}\nabla_gJ$ gives \eqref{eq:classical-obata-integral}.
\end{proof}

\subsection{An algebraic trace-free tensor}
For any vector $V$, put
\begin{equation}\label{eq:case-AS}
 \Ac_E(V)_{ijk}=E_{ij}V_k-E_{ik}V_j+
       \frac{(E(V))_kg_{ij}-(E(V))_jg_{ik}}{n-1}.
\end{equation}
Direct contraction gives
\begin{equation}\label{eq:case-AS-norm}
 \frac{n-1}{2n}|\Ac_E(V)|^2=\frac{n-1}{n}|E|^2|V|^2-|E(V)|^2.
\end{equation}
Indeed, in a $g$-orthonormal frame the required contractions are
\begin{align}
\sum_{i,j,k}(E_{ij}V_k-E_{ik}V_j)^2
   &=2|E|^2|V|^2-2|E(V)|^2,\notag\\
\sum_{i,j,k}\bigl((E(V))_kg_{ij}-(E(V))_jg_{ik}\bigr)^2
   &=2(n-1)|E(V)|^2,\notag\\
\sum_{i,j,k}(E_{ij}V_k-E_{ik}V_j)
 \bigl((E(V))_kg_{ij}-(E(V))_jg_{ik}\bigr)
   &=-2|E(V)|^2.
\label{eq:algebraic-contractions}
\end{align}
Expanding \eqref{eq:case-AS} using \eqref{eq:algebraic-contractions} proves
\eqref{eq:case-AS-norm}.

\subsection{Curvature conventions}
In indexed formulas, $R_{ijkl}$, $W_{ijkl}$, $C_{ijk}$, and $B_{jk}$ denote
the Riemann, Weyl, Cotton, and Bach tensors of $g$. Our curvature convention is
\begin{equation}\label{eq:curvature-convention}
 R_{ijkl}=W_{ijkl}+(A_g)_{ik}g_{jl}-(A_g)_{il}g_{jk}
 -(A_g)_{jk}g_{il}+(A_g)_{jl}g_{ik},
\end{equation}
with the Weyl tensor retained throughout for a general Einstein background.
The corresponding commutator on one-forms is
\begin{equation}\label{eq:commutator}
 (\nabla_i\nabla_j-\nabla_j\nabla_i)\xi_k=R_{ijk}{}^{\ell}\xi_\ell.
\end{equation}
The Cotton and Bach tensors are
\begin{equation}\label{eq:Cotton-Bach}
 C_{ijk}=(\nabla_g)_i(A_g)_{jk}-(\nabla_g)_j(A_g)_{ik},\qquad
 B_{jk}=(\nabla_g)^iC_{ijk}+W_{jikl}(A_g)^{il}.
\end{equation}
Let $\sigma_3(A_g)$ denote the third elementary symmetric function
of the eigenvalues of $g^{-1}A_g$. With these conventions, the sixth-order
curvature is
\begin{align}
Q_{6,g}={}&48\sigma_3(A_g)+\frac{16}{n-4}\ip{A_g}{B}
 -\frac{n+2}{2}\Delta_g(J^2)+4\Delta_g|A_g|^2
 +8\Div_g(A_g(\nabla_gJ))\notag\\
&+\Delta_g^2J-\frac{n-6}{2}J\Delta_gJ
 -4(n-6)J|A_g|^2+\frac{(n-6)(n+6)}4J^3.
\label{eq:Q6-local}
\end{align}
This is the formula of Chen--Hou~\cite[(1-1)]{ChenHou} in the convention
\eqref{eq:normalization}. Their Cotton tensor is $C^{\mathrm{CH}}_{ijk}=C_{kji}$;
the Bach contractions agree after this index permutation and the Weyl
symmetries.
The conformal covariance law is
\begin{equation}\label{eq:covariance}
 P_{6,u^2g_0}(v)=u^{-\frac{n+6}{2}}P_{6,g_0}\bigl(u^{\frac{n-6}{2}}v\bigr).
\end{equation}

In the critical dimension the curvature transformation law is
\begin{equation}\label{eq:critical-covariance}
 P_{6,g_0}(\log u)+Q_{6,g_0}=Q_{6,g}u^6,\qquad n=6.
\end{equation}

\section{Positivity derived from the sixth-order equation}\label{sec:positivity}

Throughout this section, $(M^n,g_0)$ is closed, $n\ge6$, and
\eqref{eq:background} holds. We prove positivity of $J$ and of the quantity
$\Theta_u$ in \eqref{eq:Theta}.

\subsection{The sign of the constant curvature}
The Einstein factorization~\cite[(1-4)]{ChenHou} and its fourth-order
counterpart are
\begin{equation}\label{eq:factorization}
 P_{6,g_0}=L_1L_2L_3,\qquad P_{4,g_0}=L_1L_2,
\end{equation}
where
\begin{align}
L_1&=-\Delta_{g_0}+\frac{n(n-2)}4,\qquad
 L_2=-\Delta_{g_0}+\frac{(n-4)(n+2)}4,\notag\\
L_3&=-\Delta_{g_0}+\frac{(n-6)(n+4)}4.
\label{eq:factors}
\end{align}
In the normalization \eqref{eq:background},
\[
 Q_{6,g_0}=\frac{n(n^2-4)(n^2-16)}{32}>0.
\]
\begin{lemma}\label{lem:constant-Q-positive}
If $Q_{6,g}$ is constant, then $Q_{6,g}>0$.
\end{lemma}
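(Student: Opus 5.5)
The plan is to integrate the conformal transformation law of $Q_{6}$ against the background volume form. The factorization \eqref{eq:factorization} shows that each factor $L_i$ differs from a constant only by a Laplacian, and the Laplacian integrates to zero. After integration, only the product of the zeroth-order coefficients survives. I treat $n>6$ and $n=6$ separately, since the transformation law has a different form in the critical dimension.

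\emph{Case $n>6$.} Write $Q_{6,g}=\Lambda$ and set $w=u^{\frac{n-6}{2}}$, which is smooth and positive.
\begin{itemize}
\item By \eqref{eq:normalization} and \eqref{eq:covariance} with $v=1$,
\[
 \frac{n-6}{2}\Lambda=P_{6,g}(1)=u^{-\frac{n+6}{2}}P_{6,g_0}(w),
\]
so $L_1L_2L_3w=\frac{n-6}{2}\Lambda\,u^{\frac{n+6}{2}}$.
\item Each factor has the form $L_i=-\Delta_{g_0}+c_i$. On the closed manifold $M$, $\int_M\Delta_{g_0}f\dvo=0$ for every smooth $f$. Hence $\int_M L_if\dvo=c_i\int_M f\dvo$.
\item Applying this three times, to $f=L_2L_3w$, then $f=L_3w$, then $f=w$, gives
\[
 c_1c_2c_3\int_M w\dvo=\frac{n-6}{2}\Lambda\int_M u^{\frac{n+6}{2}}\dvo.
\]
\item For $n>6$, all three constants $c_1=\frac{n(n-2)}4$, $c_2=\frac{(n-4)(n+2)}4$ and $c_3=\frac{(n-6)(n+4)}4$ are positive. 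Also $w>0$, $u>0$ and $n-6>0$. Therefore $\Lambda>0$.
\end{itemize}

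\emph{Case $n=6$.} Here $c_3=0$, so the previous identity degenerates to $0=0$. I use \eqref{eq:critical-covariance} instead.
\begin{itemize}
\item Integrating \eqref{eq:critical-covariance} over $M$ with respect to $\dvo$ gives
\[
 \int_M P_{6,g_0}(\log u)\dvo+Q_{6,g_0}\mathrm{Vol}(M,g_0)=\Lambda\int_M u^6\dvo.
\]
\item Since $L_3=-\Delta_{g_0}$ when $n=6$, the same argument as above gives
\[
 \int_M L_1L_2L_3(\log u)\dvo=c_1c_2\int_M(-\Delta_{g_0}\log u)\dvo=0.
\]
\item By \eqref{eq:reference}, $Q_{6,g_0}=\frac{6\cdot32\cdot20}{32}>0$. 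Therefore $\Lambda\int_M u^6\dvo>0$, and so $\Lambda>0$.
\end{itemize}

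There is no serious obstacle. The only points to check are the sign of $c_3$, which separates the two cases, and that the factorization \eqref{eq:factorization} holds under the normalization \eqref{eq:background}. No self-adjointness beyond $\int_M\Delta_{g_0}f\dvo=0$ is needed.
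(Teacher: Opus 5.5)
Your proposal is correct and is essentially the paper's proof. For $n\ge7$ the paper integrates $L_1L_2L_3\bigl(u^{\frac{n-6}{2}}\bigr)=\frac{n-6}{2}Q_{6,g}u^{\frac{n+6}{2}}$ against $\dvo$ and obtains $Q_{6,g}=\frac{n(n^2-4)(n^2-16)}{32}\int_M u^{\frac{n-6}{2}}\dvo\big/\int_M u^{\frac{n+6}{2}}\dvo$, which is your identity $c_1c_2c_3\int_M w\dvo=\frac{n-6}{2}\Lambda\int_M u^{\frac{n+6}{2}}\dvo$ rearranged. For $n=6$ it integrates the critical law to get $Q_{6,g}=120\operatorname{Vol}(M,g_0)/\int_M u^6\dvo$, exactly as you do.
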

\begin{proof}
For $n\ge7$, \eqref{eq:covariance}, \eqref{eq:normalization},
and \eqref{eq:factorization} give
\begin{equation}\label{eq:factor-equation}
 L_1L_2L_3\bigl(u^{\frac{n-6}{2}}\bigr)=\frac{n-6}{2}Q_{6,g} u^{\frac{n+6}{2}}.
\end{equation}
Integrating \eqref{eq:factor-equation} and using the constants
in \eqref{eq:factors} and \eqref{eq:reference} yields
\[
 Q_{6,g}=\frac{n(n^2-4)(n^2-16)}{32}
 \frac{\int_Mu^{\frac{n-6}{2}}\dvo}{\int_Mu^{\frac{n+6}{2}}\dvo}>0.
\]
For $n=6$, \eqref{eq:critical-covariance},
\eqref{eq:factorization}, \eqref{eq:factors}, and \eqref{eq:reference} give
\begin{equation}\label{eq:critical-equation}
 (-\Delta_{g_0})(-\Delta_{g_0}+4)(-\Delta_{g_0}+6)(\log u)+120
       =Q_{6,g} u^6.
\end{equation}
Integrating \eqref{eq:critical-equation} gives
\[
 Q_{6,g}=120\frac{\operatorname{Vol}(M,g_0)}
                         {\int_Mu^6\dvo}>0.
\]
\end{proof}

\subsection{The conformal path and the auxiliary metric}
For the path below, $J_t$ and $E_t$ are defined from $g_t$ and
$u_t$ by \eqref{eq:Schouten} and \eqref{eq:JS-from-u}. Norms and inner products in a formula
involving $g_t$ are taken with respect to $g_t$, except where another metric
is explicitly indicated. Curvature quantities of $\widehat g_t$ retain the
full metric subscript.
\begin{proposition}\label{prop:J-positive}
If $Q_{6,g}>0$ pointwise, then $J>0$, $u^2J+2+2|\nabla_g u|^2>0$, and $\Theta_u>0$.
\end{proposition}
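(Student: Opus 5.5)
The plan is to use the factorization \eqref{eq:factorization} and a maximum-principle argument along a conformal path. Write $P_{6,g_0}=L_1L_2L_3$ with $L_1,L_2,L_3$ positive operators on $(M,g_0)$ whose zeroth-order coefficients are positive for $n\ge7$. For $n=6$, $L_3=-\Delta_{g_0}$ and we work with $\log u$ instead. For $n\ge7$, set $w=u^{\frac{n-6}{2}}$. Then \eqref{eq:factor-equation} reads $L_1L_2L_3w>0$. Since each $L_i$ has a positive inverse (positive Green function), we obtain successively $L_2L_3w>0$, and then $L_3w>0$.

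The key observation is that each partial product is a curvature of some conformal metric. By \eqref{eq:covariance} in its fourth- and second-order versions, $L_3w>0$ is, up to a positive factor, a statement about a conformal metric of the form $\widehat g_t=u^{2t}g_0$ or $u_t^2g_0$, with $u_t$ a suitable power of $u$. Similarly, $L_2L_3w>0$ is a fourth-order-type quantity. The family $\widehat g_t$ of \eqref{eq:auxiliary} is presumably designed so that its scalar curvature, rewritten in terms of $g=u^2g_0$ using \eqref{eq:Laplacian-transform}, \eqref{eq:JS-from-u} and \eqref{eq:u-scale-gradient}, equals a positive multiple of $\Theta_u$ at $t=1$; this is \eqref{eq:Theta-geometry}. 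The concrete steps are as follows.
\begin{enumerate}
\item Compute $R_{\widehat g_t}$ in terms of $J_t$, $E_t$ and $\nabla\log u_t$. The square term in \eqref{eq:Theta}, with denominator $u^2J+2+2|\nabla u|^2$, should arise from writing $\widehat g_1$ as a conformal change of $g$ by a factor built from $\frac{n-4}{2}$-type powers of $u^2J+2+2|\nabla u|^2$. This factor is exactly $u^{2}$ times a positive multiple of $L_3$ applied to a power of $u$, via \eqref{eq:scale-Hessian}.
\item Use $L_3w>0$ to get $u^2J+2+2|\nabla_gu|^2>0$. Indeed, $L_3(u^{\frac{n-6}{2}})$ expands, via \eqref{eq:scale-Hessian} and \eqref{eq:Laplacian-transform}, to a positive multiple of $u^{\frac{n-6}{2}-2}\bigl(u^2J+c(1+|\nabla u|^2)\bigr)$. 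This gives the denominator positivity and, for the path, $J_t>0$ at the endpoint.
\item Use $L_2L_3w>0$, which is $Q_4$ of an intermediate metric up to a factor, together with $J>0$ from a continuity argument, to deduce positivity of $R_{\widehat g_1}$, hence $\Theta_u>0$.
\end{enumerate}

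The positivity of $J$ itself should come from a continuity method along $t\in[0,1]$, with $g_t=u^{2t}g_0$ (or $u_t=u^t$). At $t=0$ we have $J_0=n/2>0$. Suppose $J_t>0$ fails first at some $t_*$ and some point. At that point, apply the Gursky--Malchiodi-type argument: the relevant scalar inequality, a second-order elliptic inequality for $J_t$ derived from positivity of $L_2L_3$ applied to $u_t$-powers (i.e.\ $Q_4$-type positivity with $J_t\ge0$), yields $-\Delta J_t+(\text{bounded})J_t>0$ at a minimum where $J_t=0$. This contradicts the minimum principle. The main obstacle is this step: the partial product $L_2L_3w>0$ only holds for the true $u$, not for $u^t$. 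The path must therefore be chosen (as \eqref{eq:auxiliary} presumably is) so that the positivity of $L_1^{-1}$ of the sixth-order datum propagates for all $t$, for example by interpolating the exponents while keeping each factor's positivity. The first thing I would try is to verify that $L_3(u_t^{\frac{n-6}{2}})$ and $L_2L_3(\cdot)$ remain positive by convexity in $t$.

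For $n=6$, I would replace $w$ by $\log u$ in \eqref{eq:critical-equation}. Here $L_2L_3(\log u)$ is handled by applying the inverses of $L_1=-\Delta+6$ and $L_2=-\Delta+4$ to $Q_{6,g}u^6-120+\dots$. This is the separate logarithmic path mentioned in the introduction, and otherwise it follows the same steps.
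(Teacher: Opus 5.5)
There is a genuine gap, and it sits where you yourself put the ``main obstacle''.

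\textbf{The path.} Your step 2 is fine and needs no path. The minimum principle at $t=1$, applied first for $L_1$ and then for $L_2$, gives $L_3(u^{\frac{n-6}{2}})=\frac{n-6}{2}u^{\frac{n-6}{2}}(u^2J+2+2|\nabla_gu|^2)>0$. But $J>0$ needs a path $u_t$ from $1$ to $u$ along which the sixth-order forcing stays positive. Your $u_t=u^t$ for $n\ge7$ is not such a path. Nothing in the hypothesis controls the sign of $L_1L_2L_3(u^{t\frac{n-6}{2}})$ for $0<t<1$, and interpolating exponents or convexity will not supply one.

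The missing idea is to interpolate linearly in the variable on which $L_1L_2L_3$ acts: $u_t^{\frac{n-6}{2}}=1-t+tu^{\frac{n-6}{2}}$. Since $L_1L_2L_3(1)=\frac{n-6}{2}Q_{6,g_0}$, linearity gives $L_1L_2L_3(u_t^{\frac{n-6}{2}})=\frac{n-6}{2}\bigl[(1-t)Q_{6,g_0}+tQ_{6,g}u^{\frac{n+6}{2}}\bigr]>0$ for all $t$. The resulting $v_t=\frac2{n-6}L_3(u_t^{\frac{n-6}{2}})$ is then exactly the linear path $(1-t)v_0+tv_1$.

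For $n=6$, $u_t=u^t$ is the right path, because the critical equation is affine in $\log u$. However, your sketch omits the constant shift. $L_1L_2L_3(\log u)=Q_{6,g}u^6-120$ has no sign, so one must use $v_t=5-t\Delta_{g_0}\log u$, for which $(-\Delta_{g_0}+6)(-\Delta_{g_0}+4)v_t=(1-t)120+tQ_{6,g}u^6>0$.

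\textbf{The positivity mechanism.} This is also misassigned. The metric with positive $Q_4$ is $\widehat g_t=v_t^{4/(n-4)}g_0$, because $P_{4,g_0}v_t=L_1L_2v_t$ is the whole sixth-order forcing. $L_2L_3w>0$ is only the intermediate step toward $v_t>0$.

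The Gursky--Malchiodi continuity argument is run on $J_{\widehat g_t}$ itself. The metric $\widehat g_0$ is homothetic to $g_0$, and at a first zero minimum of $J_{\widehat g_t}$ one would have $Q_{4,\widehat g_t}\le0$. No input $J>0$ is used here; $J_t>0$ is a consequence, not an ingredient. Write $\widehat g_t=\phi_t^{4/(n-4)}g_t$ with $\phi_t=u_tJ_t+2(1+|\nabla_{g_t}u_t|^2)/u_t$. At a first zero minimum of $J_t$ one has $u_t^{-1}\Delta_{g_t}\phi_t=\Delta_{g_t}J_t+4|E_t|^2\ge0$. The scalar-curvature transformation law then forces $J_{\widehat g_t}\le0$, a contradiction. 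Finally, $\Theta_u$ is a positive multiple of $J_{\widehat g_1}$, with prefactor positive because $v_1>0$. Your step 3 therefore runs the logic backwards.

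Your proposed direct route to $J_t>0$ through $L_2L_3w_t>0$ also does not close. At a zero minimum of $J_t$,
\[
u_t^{-\frac{n+2}{2}}L_2v_t=-\Delta_{g_t}J_t-4|E_t|^2-8E_t(\nabla_{g_t}\log u_t,\nabla_{g_t}\log u_t)+(n-4)\frac{(1+|\nabla_{g_t}u_t|^2)^2}{u_t^4},
\]
and the last term is strictly positive. So there is no contradiction with the minimum principle.
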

\begin{proof}
For $n\ge7$, define
\begin{gather}
u_t=\bigl(1-t+tu^{\frac{n-6}{2}}\bigr)^{\frac{2}{n-6}},\qquad g_t=u_t^2g_0,\notag\\
v_t=\frac{2}{n-6}L_3\bigl(u_t^{\frac{n-6}{2}}\bigr),\qquad 0\le t\le1.
\label{eq:positive-path}
\end{gather}
By \eqref{eq:positive-path}, \eqref{eq:factorization},
\eqref{eq:covariance}, \eqref{eq:normalization}, and \eqref{eq:reference},
linearity gives
\begin{equation}\label{eq:positive-forcing}
 L_1L_2v_t=(1-t)Q_{6,g_0}+tQ_{6,g}u^{\frac{n+6}{2}}>0.
\end{equation}
The zeroth-order coefficients of $L_1$ and $L_2$ are positive. Applying
the minimum principle first to $L_2v_t$ and then to $v_t$ gives $v_t>0$.
Applying \eqref{eq:Laplacian-transform} to $u_t^{\frac{n-6}{2}}$ gives
\begin{equation}\label{eq:path-Laplacian}
 \frac{2\Delta_{g_0}\bigl(u_t^{\frac{n-6}{2}}\bigr)}{(n-6)u_t^{\frac{n-2}{2}}}
       =\frac{\Delta_{g_t}u_t}{u_t}-\frac{n+4}{2}|\nabla_{g_t}\log u_t|^2.
\end{equation}
Substituting \eqref{eq:path-Laplacian} and \eqref{eq:scale-Hessian} into
\eqref{eq:positive-path}, with \eqref{eq:factors}, gives
\begin{equation}\label{eq:auxiliary-factor}
v_t=u_t^{\frac{n-2}{2}}
 \left(J_t+\frac{2(1+|\nabla_{g_t}u_t|^2)}{u_t^2}\right)>0.
\end{equation}

For $n=6$, set instead
\begin{equation}\label{eq:critical-path}
 u_t=u^t,\qquad g_t=u^{2t}g_0,
 \qquad v_t=5-t\Delta_{g_0}(\log u).
\end{equation}
The critical law \eqref{eq:critical-covariance}, together with
\eqref{eq:factorization}, \eqref{eq:factors}, \eqref{eq:reference}, and
\eqref{eq:critical-path}, gives
\begin{equation}\label{eq:critical-positive-forcing}
 (-\Delta_{g_0}+4)(-\Delta_{g_0}+6)v_t
   =(1-t)120+tQ_{6,g}u^6>0.
\end{equation}
Applying the minimum principle to \eqref{eq:critical-positive-forcing}
first for $L_2v_t$ and then for $v_t$ gives $v_t>0$.
The trace of \eqref{eq:Schouten-transform} and \eqref{eq:critical-path} give
\begin{align}
u_t^2J_t&=3-t\Delta_{g_0}(\log u)-2t^2|\nabla_{g_0}(\log u)|_{g_0}^2,\notag\\
2(1+|\nabla_{g_t}u_t|^2)&=2+2t^2|\nabla_{g_0}(\log u)|_{g_0}^2.
\label{eq:critical-path-trace}
\end{align}
Adding the two equations in \eqref{eq:critical-path-trace} gives
$v_t=u_t^2J_t+2(1+|\nabla_{g_t}u_t|^2)$ by \eqref{eq:critical-path};
this is \eqref{eq:auxiliary-factor} when $n=6$.

In either case consider
\begin{equation}\label{eq:auxiliary}
 \widehat g_t=v_t^{\frac{4}{n-4}}g_0.
\end{equation}
The fourth-order covariance law~\cite[(1.5)]{CaseGover}, with
\eqref{eq:factorization}, \eqref{eq:positive-forcing}, and
\eqref{eq:critical-positive-forcing}, gives
\begin{equation}\label{eq:auxiliary-Q4}
 P_{4,g_0}v_t=\frac{n-4}{2}Q_{4,\widehat g_t}
                    v_t^{\frac{n+4}{n-4}}>0.
\end{equation}
Thus $Q_{4,\widehat g_t}>0$. By \eqref{eq:positive-path} and
\eqref{eq:factors}, or by \eqref{eq:critical-path},
$v_0=\frac{n+4}{2}$ is constant. Hence \eqref{eq:background} and
\eqref{eq:auxiliary} give $J_{\widehat g_0}>0$.
The metrics depend smoothly on $t$; compactness ensures that their minimum
scalar curvatures vary continuously.
If this positivity first failed, then at a zero minimum of
$J_{\widehat g_t}$, the formula \eqref{eq:Q4} applied to $\widehat g_t$
would give $Q_{4,\widehat g_t}\le0$, contradicting \eqref{eq:auxiliary-Q4}.
Hence $J_{\widehat g_t}>0$ for all $t$.

By \eqref{eq:auxiliary-factor} and \eqref{eq:auxiliary}, we can write
the auxiliary metric relative to $g_t$ as
\begin{equation}\label{eq:auxiliary-relative}
 \widehat g_t=
 \left[u_tJ_t+\frac{2(1+|\nabla_{g_t}u_t|^2)}{u_t}\right]^{\frac{4}{n-4}}g_t.
\end{equation}
The product rule, \eqref{eq:scale-Hessian}, \eqref{eq:u-scale-gradient},
and \eqref{eq:u-scale-laplacian} give
\begin{align}
 &\frac1{u_t}\nabla_{g_t}
       \left[u_tJ_t+\frac{2(1+|\nabla_{g_t}u_t|^2)}{u_t}\right]\label{eq:auxiliary-derivatives}\\*
 &\qquad=\nabla_{g_t}J_t-4E_t(\nabla_{g_t}\log u_t)
                +\frac{n-4}{n}J_t\nabla_{g_t}\log u_t,\notag\\
 &\frac1{u_t}\Delta_{g_t}
       \left[u_tJ_t+\frac{2(1+|\nabla_{g_t}u_t|^2)}{u_t}\right]\notag\\*
 &\qquad=\Delta_{g_t}J_t+4|E_t|^2
        -2\ip{\nabla_{g_t}\log u_t}{\nabla_{g_t}J_t}
        +\frac{n-4}{n}J_t\frac{\Delta_{g_t}u_t}{u_t}.\notag
\end{align}
Taking the trace of \eqref{eq:Schouten-transform} for the conformal
factor in \eqref{eq:auxiliary-relative} gives
\begin{align}
&\left[u_tJ_t+\frac{2(1+|\nabla_{g_t}u_t|^2)}{u_t}\right]^{\frac{4}{n-4}}
                  J_{\widehat g_t}\notag\\
&\quad=J_t-\frac2{n-4}
  \frac{\Delta_{g_t}\left[u_tJ_t+\frac{2(1+|\nabla_{g_t}u_t|^2)}{u_t}\right]}
       {u_tJ_t+\frac{2(1+|\nabla_{g_t}u_t|^2)}{u_t}}\notag\\
&\qquad\quad-\frac4{(n-4)^2}
  \left|\nabla_{g_t}\log\left[u_tJ_t+
                      \frac{2(1+|\nabla_{g_t}u_t|^2)}{u_t}\right]\right|^2.
\label{eq:scalar-path-transform}
\end{align}
At a first zero minimum of $J_t$, one has $\nabla_{g_t}J_t=0$ and
$\Delta_{g_t}J_t\ge0$. The second identity in
\eqref{eq:auxiliary-derivatives} makes the Laplacian in
\eqref{eq:scalar-path-transform} nonnegative. Its right-hand side is then
nonpositive, contradicting $J_{\widehat g_t}>0$. Thus $J_t>0$ throughout
the path. Multiplying \eqref{eq:scalar-path-transform} by
\[
 \frac{n-4}{2}\left(J_t+
                   \frac{2(1+|\nabla_{g_t}u_t|^2)}{u_t^2}\right)
\]
and using \eqref{eq:auxiliary-derivatives} and \eqref{eq:Theta} yields
\begin{align}
\Theta_{u_t}={}&\frac{n-4}{2}
  \left(J_t+\frac{2(1+|\nabla_{g_t}u_t|^2)}{u_t^2}\right)\notag\\
&\times\left[u_tJ_t+\frac{2(1+|\nabla_{g_t}u_t|^2)}{u_t}\right]^{\frac{4}{n-4}}
             J_{\widehat g_t}>0.
\label{eq:Theta-geometry}
\end{align}
Taking $t=1$ completes the proof.
\end{proof}
The auxiliary path follows the positivity mechanism in
Gursky--Malchiodi~\cite{GurskyMalchiodi}. For constant $Q_{6,g}$,
\begin{equation}\label{eq:positivity-summary}
 \Lambda>0,\qquad J>0,\qquad
 u^2J+2+2|\nabla_gu|^2>0,\qquad\Theta_u>0.
\end{equation}

\section{Construction of the Obata vector field}\label{sec:construction}

Throughout this section, $u>0$ is smooth and satisfies
\eqref{eq:background}. The identities below do not require $Q_{6,g}$ to
be constant or any sign condition on $J$ or $\Theta_u$.

\subsection{The Bach-corrected Newton tensor}
Define
\begin{equation}\label{eq:Newton}
 \mathbb N=E^2-\frac{|E|^2}{2}g+\frac{B}{n-4}.
\end{equation}

\begin{lemma}\label{lem:Bach}
The Cotton and Bach tensors satisfy
\begin{gather}
C_{ijk}=-W_{ijk\ell}(\nabla_g\log u)^\ell,\notag\\
B_{jk}=(n-4)W_{jakb}(\nabla_g\log u)^a(\nabla_g\log u)^b,
 \qquad B(\nabla_g\log u,\cdot)=0.
\label{eq:CB-relations}
\end{gather}
For every smooth weight function $\psi$,
\begin{align}
\Dw\bigl(\psi\mathbb N(\uloggrad)\bigr)
 ={}&E^2(\uloggrad,\nabla_g\psi)
       -\frac{|E|^2}{2}\ip{\uloggrad}{\nabla_g\psi}\notag\\
&+\frac{n-2}{n}\psi E(\uloggrad,\nabla_gJ)
       -\frac{n-2}{2n}\psi\frac{\Delta_gu}{u}|E|^2\notag\\
&-\psi\tr_g(E^3)-\frac{\psi}{n-4}\ip{E}{B}.
\label{eq:Newton-flux}
\end{align}
\end{lemma}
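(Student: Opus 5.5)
The plan is to take the three assertions in order: first the Cotton identity, then the Bach identity, and finally the flux formula \eqref{eq:Newton-flux}. Throughout I write $\omega=\log u$, so $\nabla_g\omega=\uloggrad$.

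\emph{Cotton tensor.} Since $g_0$ is Einstein, $C_{g_0}=0$. The Cotton tensor of $g$ could be obtained from the conformal transformation law. I would instead compute it directly from $A_g=E+\frac Jn g$, together with $E=-u^{-1}\nabla_g^2u+\frac{\Delta_gu}{nu}g$ and the expression \eqref{eq:JS-from-u} for $J$.
\begin{itemize}
\item Antisymmetrizing $\nabla_i(-u^{-1}u_{jk})$ in $i,j$ gives the commutator term $-u^{-1}R_{ijk}{}^\ell u_\ell$ from \eqref{eq:commutator}, plus terms $u^{-2}(u_iu_{jk}-u_ju_{ik})$.
\item Expanding $R$ by \eqref{eq:curvature-convention}, the Schouten parts of $R_{ijk}{}^\ell u_\ell$ combine with these $u^{-2}$ terms, rewritten through \eqref{eq:scale-Hessian}.
\item They also combine with the antisymmetrized gradient of the trace term $\bigl(\frac{\Delta_gu}{nu}+\frac Jn\bigr)g_{jk}$, which equals $\frac{1+|\nabla_gu|^2}{2u^2}g_{jk}$ by \eqref{eq:scale-Hessian}. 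Its gradient is given by \eqref{eq:u-scale-gradient}.
\end{itemize}
All these terms should cancel exactly, leaving $C_{ijk}=-W_{ijk\ell}\omega^\ell$.

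\emph{Bach tensor.} By the Cotton identity, $\nabla^iC_{ijk}=-(\nabla^iW_{ijk\ell})\omega^\ell-W_{ijk\ell}\nabla^i\nabla^\ell\omega$.
\begin{itemize}
\item For the first term I would use the contracted Bianchi identity $\nabla^iW_{ijk\ell}=(n-3)\times(\text{Cotton})$, with the index placement and sign fixed by \eqref{eq:Cotton-Bach}. Substituting the Cotton identity again turns it into $(n-3)W_{jak b}\omega^a\omega^b$.
\item In the second term I would insert \eqref{eq:log-Hessian}. The $g$-part dies because $W$ is trace-free.
\item The $-E$ part cancels against $W_{jik\ell}(A_g)^{i\ell}=W_{jik\ell}E^{i\ell}$ in the definition of $B$.
\item The $-\omega\omega$ part supplies one more copy of $W\omega\omega$.
\end{itemize}
The total is $(n-4)W_{jakb}\omega^a\omega^b$. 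Then $B(\uloggrad,\cdot)=0$ follows from the antisymmetry of $W$ in its first pair of indices.

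\emph{Flux formula.} I would write $\Dw(\psi\mathbb N(\omega))=\mathbb N(\uloggrad,\nabla_g\psi)+\psi\Dw(\mathbb N(\uloggrad))$. Since $B(\uloggrad)=0$, the first term is exactly the two $\nabla_g\psi$ terms of \eqref{eq:Newton-flux}. For the second term I apply Lemma~\ref{lem:case-scale} with $T=\mathbb N$, using:
\begin{itemize}
\item $\tr_g\mathbb N=-\frac{n-2}{2}|E|^2$, since $B$ is trace-free; this produces the $\frac{\Delta_gu}{u}|E|^2$ term;
\item $\ip{\mathbb N}{E}=\tr_g(E^3)+\frac1{n-4}\ip EB$, since $\tr_gE=0$; this produces the cubic and Bach terms.
\end{itemize}
What remains is to show
\[
\ip{\Div_g\mathbb N}{\uloggrad}=\tfrac{n-2}{n}E(\uloggrad,\nabla_gJ).
\]
For this I expand $\Div_g(E^2)-\frac12\nabla_g|E|^2$ as $(\Div_gE)E+E^{ij}(\nabla_iE_{jk}-\nabla_kE_{ij})$. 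The first piece gives $\frac{n-1}{n}E(\nabla_gJ)$. Writing the antisymmetric difference as a Cotton tensor of $A_g$ minus $\frac1n(\nabla J\otimes g)$ corrections gives $-\frac1nE(\nabla_gJ)$ together with $E^{ij}C_{\cdot\cdot\cdot}$. Paired with $\uloggrad$ and using the Cotton identity, the latter becomes a $W(\cdot,\omega,\cdot,\omega)\cdot E$ contraction. It must cancel against $\frac1{n-4}\ip{\Div_gB}{\uloggrad}$, using the divergence identity $\nabla^jB_{jk}=(n-4)A^{ij}C_{\cdot\cdot k}$, or equivalently by differentiating the explicit formula for $B$ and using $B(\uloggrad)=0$.

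The main obstacle I expect is bookkeeping. The index placement and sign in the Weyl divergence identity, in the Bach divergence identity, and in the resulting $W\omega\omega E$ contraction all have to be consistent with \eqref{eq:curvature-convention} and \eqref{eq:Cotton-Bach}. I would fix these by checking them against the Cotton identity itself.
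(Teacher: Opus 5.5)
Your proposal is correct. The Cotton and Bach steps are the paper's own argument. You differentiate the Hessian relation $u_{jk}=-u(A_g)_{jk}+\frac{1+|\nabla_gu|^2}{2u}g_{jk}$ and antisymmetrize, then combine $\nabla^iW_{ijk\ell}=(n-3)C_{k\ell j}$ with \eqref{eq:log-Hessian}. One wording point: the $\nabla_g\log u\otimes\nabla_g\log u$ part of the Hessian contributes $W_{ijk\ell}\omega^i\omega^\ell=-W_{jakb}\omega^a\omega^b$ in your notation. It is therefore minus one copy, which is what turns $n-3$ into $n-4$.

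Where you differ is the flux formula. The paper uses $B(\uloggrad,\cdot)=0$ before differentiating and writes $\mathbb N(\uloggrad)=E^2(\uloggrad)-\frac{|E|^2}{2}\uloggrad$. It then applies Lemma~\ref{lem:case-scale} with $T=E^2$. The Bach term arises exactly once, from $E^{ab}C_{ajb}(\uloggrad)^j=-\frac{\ip{E}{B}}{n-4}$, and $B$ is never differentiated.

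You apply the lemma with $T=\mathbb N$ instead. The Bach term then comes from $-\ip{\mathbb N}{E}$, and you need the extra fact $\ip{\Div_gB}{\uloggrad}=\ip{E}{B}$ to cancel the Cotton contraction. This fact is true, and your second suggestion gives the cleanest proof. Differentiating $B_{jk}(\uloggrad)^k=0$ gives $\ip{\Div_gB}{\uloggrad}=-\ip{B}{\nabla_g^2\log u}=\ip{E}{B}$, using \eqref{eq:log-Hessian}, $\tr_gB=0$, and $B(\uloggrad,\uloggrad)=0$.

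If you quote the general divergence identity instead, fix the indices. With $C_{ijk}=\nabla_iA_{jk}-\nabla_jA_{ik}$, the placement $A^{ij}C_{ijk}$ (which your $C_{\cdot\cdot k}$ suggests) vanishes identically by antisymmetry. The correct form in this convention is $\nabla^jB_{jk}=(n-4)A^{ij}C_{kij}$, with the free index in the antisymmetric pair. Your route works, but it costs one identity that the paper's ordering avoids.
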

\begin{proof}
By \eqref{eq:scale-Hessian}, \eqref{eq:u-scale-gradient}, and
\eqref{eq:Schouten},
\begin{equation}\label{eq:Hessian-Schouten}
 u_{ij}=-u(A_g)_{ij}+\frac{1+|\nabla_gu|^2}{2u}g_{ij},\qquad
 \nabla_i\left(\frac{1+|\nabla_gu|^2}{2u}\right)
       =-(A_g)_{i\ell}u^\ell.
\end{equation}
Differentiating the Hessian equation in \eqref{eq:Hessian-Schouten},
antisymmetrizing, and using \eqref{eq:curvature-convention}--\eqref{eq:Cotton-Bach},
we find that all Schouten terms cancel. We obtain
\[
 uC_{ijk}=-W_{ijk\ell}u^\ell.
\]
With \eqref{eq:Cotton-Bach}, the contracted second Bianchi identity is
\begin{equation}\label{eq:Weyl-divergence}
 (\nabla_g)^iW_{ijk\ell}=(n-3)C_{k\ell j}.
\end{equation}
Using \eqref{eq:Weyl-divergence}, the Cotton relation just proved,
\eqref{eq:Cotton-Bach}, and \eqref{eq:log-Hessian}, we obtain
\[
\begin{aligned}
 B_{jk}
 & =-(n-3)C_{k\ell j}(\uloggrad)^\ell
       -W_{ijk\ell}(\nabla_g)^i(\uloggrad)^\ell
       +W_{jikl}(A_g)^{il}\\
 & =-(n-3)C_{k\ell j}(\uloggrad)^\ell
       +W_{ijk\ell}(\uloggrad)^i(\uloggrad)^\ell.
\end{aligned}
\]
Substituting the Cotton relation in \eqref{eq:CB-relations} and using the
symmetries of $W$ proves the Bach formula in \eqref{eq:CB-relations}.
Its last assertion follows from antisymmetry.

Equations \eqref{eq:Cotton-Bach}, \eqref{eq:Schouten},
\eqref{eq:log-Hessian}, and \eqref{eq:CB-relations} give
\begin{align}
(\Div_g(E^2))_j&=\frac{n-2}{n}(E(\nabla_gJ))_j
                      +\frac12\nabla_j|E|^2+E^{ab}C_{ajb},\notag\\
E^{ab}C_{ajb}(\uloggrad)^j&=-\frac{\ip{E}{B}}{n-4}.
\label{eq:E2-divergence}
\end{align}
Substituting \eqref{eq:E2-divergence} into
\eqref{eq:case-tensor-general} with $T=E^2$ yields
\begin{align}
\Dw\bigl(E^2(\uloggrad)\bigr)
 ={}&\frac{n-2}{n}E(\uloggrad,\nabla_gJ)
       +\frac12\ip{\uloggrad}{\nabla_g|E|^2}\notag\\
&+\frac{\Delta_gu}{nu}|E|^2-\tr_g(E^3)
                           -\frac{\ip{E}{B}}{n-4}.
\label{eq:E2-weighted-divergence}
\end{align}
By \eqref{eq:CB-relations} and \eqref{eq:Newton},
$\mathbb N(\uloggrad)=E^2(\uloggrad)-\frac{|E|^2}{2}\uloggrad$.
Combining \eqref{eq:E2-weighted-divergence}, \eqref{eq:weighted-basic},
and the product rule proves \eqref{eq:Newton-flux}.
\end{proof}

\subsection{Scalar divergence identities}
Substituting \eqref{eq:Schouten} into \eqref{eq:Q6-local} gives
\begin{align}
Q_{6,g}={}&\Div_g\left[\nabla_g(\Delta_gJ+4|E|^2)
       +8E(\nabla_gJ)-\frac{n^2+2n-16}{n}J\nabla_gJ\right]\notag\\
&-\frac{n-6}{2}J\Delta_gJ
       +\frac{(n^2-4)(n^2-16)}{4n^2}J^3
       -\frac{4(n^2-12)}{n}J|E|^2\notag\\
&+16\tr_g(E^3)+\frac{16}{n-4}\ip{E}{B}.
\label{eq:Q6-div}
\end{align}
Here $48\sigma_3(A_g)=8J^3-24J|A_g|^2+16\tr_g(A_g^3)$.
For a smooth weight function $\psi$ and a smooth function $f$, the
weighted Green identity is
\begin{equation}\label{eq:weighted-Green}
 \Dw\left(\psi\nabla_gf-\frac f u\nabla_g(u\psi)\right)
       =\psi\Delta_gf-\frac f u\Delta_g(u\psi).
\end{equation}
It follows by expanding
$u^{-1}\Div_g(u\psi\nabla_gf-f\nabla_g(u\psi))$
in \eqref{eq:weighted-div}. Differentiating \eqref{eq:scale-Hessian}
and using \eqref{eq:u-scale-gradient} gives the first equation below.
Its divergence, with \eqref{eq:weighted-basic}, gives the second:
\begin{align}
u^{-1}\nabla_g\Delta_gu
   &=-\nabla_gJ-nE(\uloggrad)-2J\uloggrad,\notag\\
u^{-1}\Delta_g^2u
   &=-\Delta_gJ-(n+2)\ip{\uloggrad}{\nabla_gJ}
                  -2J\frac{\Delta_gu}{u}+n|E|^2.
\label{eq:u-iterated-derivatives}
\end{align}
The product rule, \eqref{eq:weighted-div}, and \eqref{eq:log-Hessian}
give
\begin{align}
&\Dw\left(\ip{\uloggrad}{\nabla_gJ}\nabla_gJ
                 -\frac12|\nabla_gJ|^2\uloggrad\right)\notag\\
&\quad=(\Delta_gJ)\ip{\uloggrad}{\nabla_gJ}
       -E(\nabla_gJ,\nabla_gJ)
       -\frac{n-2}{2n}\frac{\Delta_gu}{u}|\nabla_gJ|^2.
\label{eq:gradient-stress}
\end{align}

\subsection{The Obata identity}
Choose
\begin{equation}\label{eq:Green-weight}
 \psi=\frac{\Delta_gu}{u}-\frac{n(n-4)}{8(n-1)}
                         \left(J-\frac n2\right),\qquad
 f=\Delta_gJ+4|E|^2.
\end{equation}
The function $f$ groups the highest-order scalar terms in
\eqref{eq:Q6-div}. The first term of $\psi$ will cancel the undifferentiated
$Q_{6,g}$ contribution of the flux $-Q_{6,g}\uloggrad$; the second will
produce the scalar-curvature-defect source in \eqref{u:eq:source}.

Define the symmetric tensor
\begin{align}
\mathbb T_n={}&\frac{n-4}{n-1}\left(J-\frac n2\right)
 \Biggl\{-\left[\frac{(n-6)(n+8)}{12}J+
                   \frac{n(n-1)(1+|\nabla_gu|^2)}{8u^2}\right]E\notag\\
&\quad+\left(J-\frac n2\right)
 \left[\frac{(n^2-4n+2)(1+|\nabla_gu|^2)}{32u^2}\right.\notag\\
&\left.\hspace{2.2cm}+\left(\frac{n(n-4)}{48}
       +\frac{(n-2)(n-6)(n+8)}{72n}\right)
                         \left(J+\frac n4\right)\right]g\Biggr\}.
\label{u:eq:tensor}
\end{align}
Define the vector field in grouped form by
\begin{align}
 X={}&\mathbb T_n(\uloggrad)
 +\psi\nabla_gf-\frac f u\nabla_g(u\psi)
 +8\psi E(\nabla_gJ)-16\psi\mathbb N(\uloggrad)
                                          \label{u:eq:vector-field}\\
 &-\left(\frac{3n(n+5)-37}{4}
                   -\frac{3(55n-64)}{4n(n-1)}\right)J^2E(\uloggrad)
                                                               \notag\\
 &+\left[-\frac{3n^2+2n-32}{2n}J\frac{\Delta_gu}{u}
         +\frac{(n-4)^2(n+6)}{12(n-1)}J\left(J-\frac n2\right)\right]
                                                        \nabla_gJ\notag\\
 &-\frac{(n+2)(3n-4)}{2(n-1)}
   \left[\ip{\uloggrad}{\nabla_gJ}\nabla_gJ
                        -\frac12|\nabla_gJ|^2\uloggrad\right]\notag\\
 &+\left[\frac{(n^2-4)(n^2-16)}{4n^2}J^3-Q_{6,g}\right]\uloggrad.
                                                               \notag
\end{align}
Define the scalar function $F$ by
\begin{align}
 F={}&\frac{n^2+4n-8}{8(n-1)}(\Delta_gJ)^2
 -\frac{n^2+4n-8}{4(n-1)}(\Delta_gJ)\ip{\uloggrad}{\nabla_gJ}\label{u:eq:scalar-function}\\
 &-\frac{(n-4)(n-2)}{2(n-1)}|E|^2\Delta_gJ
       +\frac{(n-4)(n-2)}{2(n-1)}E(\nabla_gJ,\nabla_gJ)\notag\\
 &+\frac{n(n-4)}{48(n-1)}\left(J-\frac n2\right)
    \left[\frac{6\Delta_gu}{u}-(5n-14)J\right]\Delta_gJ\notag\\
 &+\Biggl[\frac{(n-2)(n^2+4n-8)}{4n(n-1)}\frac{\Delta_gu}{u}
       +\left(\frac{n^2+35n-33}{24}+\frac{45}{8(n-1)}-\frac{16}{n}\right)J\notag\\*
 &\hspace{4cm}+\frac{n^2(n-4)(n+2)}{48(n-1)}\Biggr]|\nabla_gJ|^2\notag\\
 &+\left[4n|E|^2+\frac{(n-4)^2(n+6)}{12(n-1)}
                        J\left(J-\frac n2\right)\right]\ip{\uloggrad}{\nabla_gJ}\notag\\
 &-\frac{2(3n^2-8n+8)}{n-1}E^2(\uloggrad,\nabla_gJ)\notag\\
 &-\left[\left(\frac{n(13n+14)}{2(n-1)}-\frac{96}{n}\right)J
       +\frac{16(n-2)}n\frac{\Delta_gu}{u}
       -\frac{(n-4)^2}{n-1}\left(J-\frac n2\right)\right]
                                      E(\uloggrad,\nabla_gJ)\notag\\
 &+\Biggl[\left(\frac{3n(n+5)-37}{4}
                    -\frac{3(55n-64)}{4n(n-1)}\right)J^2
         +\frac{4(n^2+2n-12)}nJ\frac{\Delta_gu}{u}\notag\\*
 &\qquad+\frac{8(n-2)}n\frac{(\Delta_gu)^2}{u^2}
       -\frac{(n-4)^2}{2(n-1)}\left(J-\frac n2\right)\frac{\Delta_gu}{u}\notag\\*
 &\qquad-\frac{(n-4)(n^2-12)}{2(n-1)}J\left(J-\frac n2\right)\Biggr]|E|^2\notag\\
 &-4n|E|^4+16nE^3(\uloggrad,\uloggrad)
                    -8n|E|^2E(\uloggrad,\uloggrad)\notag\\
 &+16\left(J+\frac{n(1+|\nabla_gu|^2)}{2u^2}\right)
     \left[|E(\uloggrad)|^2-\frac{|E|^2}{2}|\uloggrad|^2\right]\notag\\
 &+\frac{(n-4)(n^2-4)(n^2-16)}{32n(n-1)}
        \left(J-\frac n2\right)\left(J^3-\frac{n^3}{8}\right)
                 +\Dw\bigl(\mathbb T_n(\uloggrad)\bigr).\notag
\end{align}

\begin{proposition}\label{prop:vector-field-identity}
The vector field \eqref{u:eq:vector-field} and scalar function
\eqref{u:eq:scalar-function} satisfy \eqref{u:eq:source}.
\end{proposition}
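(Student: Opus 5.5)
We verify \eqref{u:eq:source} by computing $\Dw X$ term by term, using the divergence identities of Sections~\ref{sec:conformal} and~\ref{sec:construction}. We then check that everything not already present in $F$ collapses to the two source terms on the right-hand side. Note that $\Dw(\mathbb T_n(\uloggrad))$ occurs both in $X$ and, verbatim, as the last summand of $F$. It therefore cancels identically and never needs to be expanded. The proposition reduces to showing that $\Dw$ of the remaining part $X-\mathbb T_n(\uloggrad)$ equals
\[
F-\Dw(\mathbb T_n(\uloggrad))-\ip{\uloggrad}{\nabla_gQ_{6,g}}-\tfrac{n(n-4)}{8(n-1)}(J-\tfrac n2)(Q_{6,g}-Q_{6,g_0}).
\]

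\textbf{The last term of $X$.} The flux $[\frac{(n^2-4)(n^2-16)}{4n^2}J^3-Q_{6,g}]\uloggrad$ has weighted divergence
\[
\ip{\uloggrad}{\nabla_g(\cdot)}+(\cdot)\frac{\Delta_gu}{u}
\]
by \eqref{eq:weighted-basic}. This produces the term $-\ip{\uloggrad}{\nabla_gQ_{6,g}}$ together with $-Q_{6,g}\frac{\Delta_gu}{u}$. We then replace $Q_{6,g}$ everywhere by its divergence form \eqref{eq:Q6-div}.

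\textbf{The Green, Newton and $E(\nabla_gJ)$ terms.} The Green term is handled by \eqref{eq:weighted-Green}, which gives
\[
\psi\Delta_gf-\frac fu\Delta_g(u\psi).
\]
Here $\psi\Delta_g f$ is exactly $\psi$ times the top-order part $\Div_g\nabla_g(\Delta_gJ+4|E|^2)$ of \eqref{eq:Q6-div}. With the choice of $\psi$ in \eqref{eq:Green-weight}, the $\frac{\Delta_gu}{u}$ part cancels the $-Q_{6,g}\frac{\Delta_gu}{u}$ contribution. The $(J-\frac n2)$ part generates the defect $-\frac{n(n-4)}{8(n-1)}(J-\frac n2)Q_{6,g}$; the constant $Q_{6,g_0}$ part is then absorbed into the explicit $(J-\frac n2)(J^3-\frac{n^3}8)$ term of $F$, using \eqref{eq:reference}.

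The term $\frac fu\Delta_g(u\psi)$ is expanded with \eqref{eq:u-iterated-derivatives}, so that $u^{-1}\Delta_g^2u$ is rewritten in terms of $\Delta_gJ$, $\ip{\uloggrad}{\nabla_gJ}$ and $|E|^2$. This produces the $(\Delta_gJ)^2$ and $|E|^2\Delta_gJ$ terms of $F$.

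Similarly, $8\psi E(\nabla_gJ)$ and $-16\psi\mathbb N(\uloggrad)$ match the terms $8\Div_g(E(\nabla_gJ))$ and $16\tr_g(E^3)+\frac{16}{n-4}\ip EB$ in \eqref{eq:Q6-div}. By \eqref{eq:Newton-flux} with weight $\psi$, the cubic and Bach contractions cancel exactly against $\psi$ times those terms. What is left consists of $E^2(\uloggrad,\nabla_g\psi)$ and $|E|^2\ip{\uloggrad}{\nabla_g\psi}$. Expanding $\nabla_g\psi$ with \eqref{eq:u-iterated-derivatives} gives the $E^3(\uloggrad,\uloggrad)$, $|E|^2E(\uloggrad,\uloggrad)$ and $E^2(\uloggrad,\nabla_gJ)$ terms in $F$.

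\textbf{Remaining lower-order fluxes.} The fluxes proportional to $J^2E(\uloggrad)$, $J\frac{\Delta_gu}{u}\nabla_gJ$, $J(J-\frac n2)\nabla_gJ$ and the gradient stress tensor are computed by the product rule. We use \eqref{eq:weighted-basic}, \eqref{eq:gradient-stress}, \eqref{eq:log-Hessian}, the divergence $\Div_gE=\frac{n-1}n\nabla_gJ$, and \eqref{eq:u-scale-gradient} to differentiate $\frac{\Delta_gu}{u}$ and $\frac{1+|\nabla_gu|^2}{u^2}$. Their coefficients are designed to absorb the leftover $J\Delta_gJ$, $J\nabla_gJ$ and $J^2|E|^2$ terms of \eqref{eq:Q6-div} when multiplied by $\psi$.

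\textbf{Main obstacle.} The final step is pure bookkeeping: sort all terms by monomial type, namely $(\Delta_gJ)^2$, $\Delta_gJ\cdot\ip{\uloggrad}{\nabla_gJ}$, $|\nabla_gJ|^2$, $E(\nabla_gJ,\nabla_gJ)$, the $E$-cubic and $E$-quartic terms, and the scalar polynomials in $J$, $\frac{\Delta_gu}{u}$ and $\frac{1+|\nabla_gu|^2}{u^2}$. Then compare coefficients with \eqref{u:eq:scalar-function}. To keep the polynomial monomials independent, we systematically eliminate $\frac{1+|\nabla_gu|^2}{u^2}$ in favour of $J$ and $\frac{\Delta_gu}{u}$ via \eqref{eq:scale-Hessian}, except in the one grouped term where $F$ keeps it. The term $|\nabla_gu|^2$ is written as $u^2|\uloggrad|^2$.

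This coefficient matching, especially for the $|\nabla_gJ|^2$, $E(\uloggrad,\nabla_gJ)$ and $|E|^2$ coefficients, which collect contributions from nearly every piece, is where errors are most likely. I would carry it out with a computer algebra check in an orthonormal frame, treating $E$, $\uloggrad$, $\nabla_gJ$, $\Delta_gJ$, $J$ and $\frac{\Delta_gu}{u}$ as formal variables subject only to the relations of Lemma~\ref{lem:conformal}.
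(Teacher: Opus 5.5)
Your plan is correct and follows the paper's own proof. The shared steps are:
\begin{itemize}
\item substituting \eqref{eq:Q6-div} into the weighted Green identity \eqref{eq:weighted-Green};
\item cancelling $\psi\ip{E}{\nabla_g^2J}$ against the $8\psi E(\nabla_gJ)$ flux, and the cubic and Bach terms against \eqref{eq:Newton-flux};
\item the source cancellation $\psi Q_{6,g}+\Dw(-Q_{6,g}\uloggrad)$, followed by the shift by $Q_{6,g_0}$ into the $(J-\frac n2)(J^3-\frac{n^3}{8})$ term;
\item the verbatim matching of $\Dw(\mathbb T_n(\uloggrad))$, and then coefficient collection.
\end{itemize}
One small imprecision: these fluxes also leave $8E(\nabla_g\psi,\nabla_gJ)$, $\psi E(\uloggrad,\nabla_gJ)$ and $\psi\frac{\Delta_gu}{u}|E|^2$ terms, not only the $E^2(\uloggrad,\nabla_g\psi)$ and $|E|^2\ip{\uloggrad}{\nabla_g\psi}$ terms you list. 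Your full coefficient comparison would pick these up anyway.
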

\begin{proof}
Equations \eqref{eq:Green-weight} and \eqref{eq:u-iterated-derivatives}
give
\begin{align}
\nabla_g\psi={}&-\frac{n^2+4n-8}{8(n-1)}\nabla_gJ
                 -nE(\uloggrad)\notag\\
&-\left(2J+\frac{\Delta_gu}{u}\right)\uloggrad.
\label{eq:psi-gradient}
\end{align}
Hence
\begin{align}
-\frac{\nabla_g(u\psi)}u
 ={}&nE(\uloggrad)+\frac{n^2+4n-8}{8(n-1)}\nabla_gJ\notag\\
&+\left[2J+\frac{n(n-4)}{8(n-1)}
                 \left(J-\frac n2\right)\right]\uloggrad.
\label{eq:Green-weight-gradient}
\end{align}
The product rule gives
\[
\begin{aligned}
 \frac{\Delta_g(u\psi)}u
 ={}&\frac{\Delta_g^2u}{u}
 -\frac{n(n-4)}{8(n-1)}\Biggl[\Delta_gJ
           +2\ip{\uloggrad}{\nabla_gJ}\\
 &\hspace{3.6cm}+\left(J-\frac n2\right)\frac{\Delta_gu}{u}\Biggr].
\end{aligned}
\]
By \eqref{eq:u-iterated-derivatives},
\begin{align}
\frac{\Delta_g(u\psi)}u
 ={}&-\frac{n^2+4n-8}{8(n-1)}\Delta_gJ
      -\frac{5n^2-8}{4(n-1)}\ip{\uloggrad}{\nabla_gJ}+n|E|^2\notag\\
&-\frac{\Delta_gu}{u}\left[2J+\frac{n(n-4)}{8(n-1)}
                                \left(J-\frac n2\right)\right].
\label{eq:Green-weight-Laplacian}
\end{align}
Using $\Div_gE=\frac{n-1}{n}\nabla_gJ$, we have
\begin{align}
\Dw\bigl(8\psi E(\nabla_gJ)\bigr)
 ={}&8\psi\ip{E}{\nabla_g^2J}
       +\frac{8(n-1)}n\psi|\nabla_gJ|^2\notag\\
&+8E(\nabla_g\psi,\nabla_gJ)
       +8\psi E(\uloggrad,\nabla_gJ).
\label{eq:EJ-flux-expanded}
\end{align}
Substitute \eqref{eq:Q6-div} into \eqref{eq:weighted-Green}.
The term $-8\psi\ip{E}{\nabla_g^2J}$ cancels with
\eqref{eq:EJ-flux-expanded}, and
$-16\psi\tr_g(E^3)-16\frac{\psi}{n-4}\ip{E}{B}$ cancels with $-16$ times
\eqref{eq:Newton-flux}. Thus
\begin{align}
&\Dw\left(\psi\nabla_gf-\frac f u\nabla_g(u\psi)
             +8\psi E(\nabla_gJ)-16\psi\mathbb N(\uloggrad)\right)\notag\\*
={}&\psi Q_{6,g}-\frac f u\Delta_g(u\psi)
      +\frac{3n^2-2n-32}{2n}\psi J\Delta_gJ
      +\frac{n^2+2n-16}{n}\psi|\nabla_gJ|^2\notag\\
&-\frac{(n^2-4)(n^2-16)}{4n^2}\psi J^3
      +\frac{4(n^2-12)}n\psi J|E|^2\notag\\
&+8E(\nabla_gJ,\nabla_g\psi)
      -\frac{8(n-4)}n\psi E(\uloggrad,\nabla_gJ)\notag\\
&-16E^2(\uloggrad,\nabla_g\psi)
      +8|E|^2\ip{\uloggrad}{\nabla_g\psi}
      +\frac{8(n-2)}n\psi\frac{\Delta_gu}{u}|E|^2.
\label{eq:Green-Newton-divergence}
\end{align}
Equations \eqref{eq:weighted-basic} and
\eqref{eq:u-iterated-derivatives} also give
\begin{align}
\Dw\bigl(J^2E(\uloggrad)\bigr)
 ={}&2JE(\uloggrad,\nabla_gJ)
       +J^2\left[\frac{n-1}n\ip{\uloggrad}{\nabla_gJ}-|E|^2\right],\notag\\
\Dw\left(J\frac{\Delta_gu}{u}\nabla_gJ\right)
 ={}&J\frac{\Delta_gu}{u}\Delta_gJ
       +\left(\frac{\Delta_gu}{u}-J\right)|\nabla_gJ|^2\notag\\*
&-nJE(\uloggrad,\nabla_gJ)-2J^2\ip{\uloggrad}{\nabla_gJ},\notag\\
\Dw\left(J\left(J-\frac n2\right)\nabla_gJ\right)
 ={}&J\left(J-\frac n2\right)\Delta_gJ
       +\left(2J-\frac n2\right)|\nabla_gJ|^2\notag\\*
&+J\left(J-\frac n2\right)\ip{\uloggrad}{\nabla_gJ},\notag\\
\Dw(J^3\uloggrad)
 ={}&3J^2\ip{\uloggrad}{\nabla_gJ}+J^3\frac{\Delta_gu}{u}.
\label{eq:lower-fluxes-expanded}
\end{align}
By \eqref{eq:weighted-basic} and \eqref{eq:Green-weight},
\begin{align}
\psi Q_{6,g}+\Dw(-Q_{6,g}\uloggrad)
 ={}&-\ip{\uloggrad}{\nabla_gQ_{6,g}}\notag\\
&-\frac{n(n-4)}{8(n-1)}\left(J-\frac n2\right)Q_{6,g}.
\label{eq:curvature-source-cancellation}
\end{align}
Substitute \eqref{eq:psi-gradient} and
\eqref{eq:Green-weight-Laplacian} into
\eqref{eq:Green-Newton-divergence} and add
\eqref{eq:lower-fluxes-expanded} and \eqref{eq:gradient-stress} with the
coefficients in \eqref{u:eq:vector-field}. The coefficients of
$(\Delta_gJ)\ip{\uloggrad}{\nabla_gJ}$,
$|E|^2\Delta_gJ$, and $E^2(\uloggrad,\nabla_gJ)$ are
\[
\begin{aligned}
 \frac{5n^2-8}{4(n-1)}-\frac{(n+2)(3n-4)}{2(n-1)}
   &=-\frac{n^2+4n-8}{4(n-1)},\\
 \frac{n^2+4n-8}{2(n-1)}-n
   &=-\frac{(n-4)(n-2)}{2(n-1)},\\
 \frac{2(n^2+4n-8)}{n-1}-8n
   &=-\frac{2(3n^2-8n+8)}{n-1}.
\end{aligned}
\]
Apart from the $J(J-\frac{n}{2})$ term, the coefficient of
$J^2\ip{\uloggrad}{\nabla_gJ}$ vanishes:
\[
\begin{aligned}
 &-\frac{n-1}{n}\left[\frac{3n(n+5)-37}{4}
                      -\frac{3(55n-64)}{4n(n-1)}\right]\\
 &\qquad+\frac{3n^2+2n-32}{n}
          +\frac{3(n^2-4)(n^2-16)}{4n^2}=0.
\end{aligned}
\]
The $J^3\frac{\Delta_gu}{u}$ terms cancel. Using \eqref{eq:reference}, the
remaining scalar source terms are
\begin{align}
&\frac{n(n-4)}{8(n-1)}\left(J-\frac n2\right)
   \left[\frac{(n^2-4)(n^2-16)}{4n^2}J^3-Q_{6,g}\right]\notag\\
={}&\frac{(n-4)(n^2-4)(n^2-16)}{32n(n-1)}
         \left(J-\frac n2\right)\left(J^3-\frac{n^3}{8}\right)\notag\\
&-\frac{n(n-4)}{8(n-1)}\left(J-\frac n2\right)(Q_{6,g}-Q_{6,g_0}).
\label{eq:background-source-shift}
\end{align}
Equations \eqref{eq:psi-gradient} and \eqref{eq:scale-Hessian} give
\[
\begin{aligned}
 &16\left(\frac{\Delta_gu}{u}+2J\right)
     \left[|E(\uloggrad)|^2-\frac{|E|^2}{2}|\uloggrad|^2\right]\\
 &\quad=16\left(J+\frac{n(1+|\nabla_gu|^2)}{2u^2}\right)
     \left[|E(\uloggrad)|^2-\frac{|E|^2}{2}|\uloggrad|^2\right].
\end{aligned}
\]
Collecting the remaining terms, including
$\Dw(\mathbb T_n(\uloggrad))$, gives \eqref{u:eq:source} with $F$ as
in \eqref{u:eq:scalar-function}.
\end{proof}

\section{Coercivity and proof of the Obata identity}\label{sec:uniform-closure}

\begin{proposition}\label{u:prop:collection}
Let $n\ge6$ be an integer, $J>0$, and $\Theta_u\ge0$. The scalar function
\eqref{u:eq:scalar-function} satisfies
\begin{align}
\frac8nF\ge{}&12|E|^2\Theta_u+\frac{|\nabla_gJ|^2}{u^2}\notag\\
&+\left(\frac14\Delta_gJ+|E|^2
       -\frac{\ip{\nabla_gu}{\nabla_gJ}}{4u}\right)^2.
\label{u:eq:collection}
\end{align}
\end{proposition}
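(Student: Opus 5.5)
The plan is to prove \eqref{u:eq:collection} as a pointwise algebraic inequality. We first expand $\Dw(\mathbb T_n(\uloggrad))$ using Lemma~\ref{lem:case-scale} with $T=\mathbb T_n$. The divergence of $\mathbb T_n$ is computed from $\Div_gE=\frac{n-1}{n}\nabla_gJ$, from \eqref{eq:u-scale-gradient} for the derivative of $(1+|\nabla_gu|^2)/u^2$, and from $\tr_gE=0$. After this expansion, $F$ contains only the following building blocks: the second-order scalar $\Delta_gJ$; the first-order quantities $\nabla_gJ$ and $E(\uloggrad)$; the zeroth-order quantities $J$, $s:=(1+|\nabla_gu|^2)/u^2$ and $|\uloggrad|^2$ (with $\Delta_gu/u=\frac n2 s-J$ by \eqref{eq:scale-Hessian}); and quartic $E$-terms. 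No further derivatives of $E$ appear, since $\nabla_g^2J$ occurs only through its trace. The task is therefore to rewrite $\frac8nF$ as the right-hand side of \eqref{u:eq:collection} plus a manifestly nonnegative remainder.

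\textbf{Step 1 (the scalar square).} We complete the square in $\Delta_gJ$. In $\frac8nF$ the coefficient of $(\Delta_gJ)^2$ is $\frac{n^2+4n-8}{n(n-1)}$, which exceeds $\frac1{16}$. We peel off $\bigl(\frac14\Delta_gJ+|E|^2-\frac{\ip{\nabla_gu}{\nabla_gJ}}{4u}\bigr)^2$. The leftover $(\Delta_gJ)^2$ coefficient together with the cross terms $\Delta_gJ\cdot(\ldots)$ must then reorganize into $12|E|^2\Theta_u$, because $\Theta_u$ contains $-\Delta_gJ$ linearly, plus further squares. The expected mechanism is that the terms of $F$ linear in $\Delta_gJ$, namely $-\frac{(n-4)(n-2)}{2(n-1)}|E|^2\Delta_gJ$ and the $(J-\frac n2)$-weighted term, combine with the $-12|E|^2\Delta_gJ$ in $12|E|^2\Theta_u$. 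Whatever $\Delta_gJ$ dependence survives must be absorbed into a square of the form $(a\Delta_gJ+\ldots)^2$. I would set up the ansatz
\[
\tfrac8nF-12|E|^2\Theta_u-\tfrac{|\nabla_gJ|^2}{u^2}-(\text{scalar square})=\mathcal R,
\]
and check that $\mathcal R$ is independent of $\Delta_gJ$, or at most quadratic in it with a positive coefficient.

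\textbf{Step 2 (the gradient block).} The term $12|E|^2\Theta_u$ contributes
\[
-\frac{24u^2|E|^2}{(n-4)(u^2J+2+2|\nabla_gu|^2)}\,|W|^2,\qquad W=\nabla_gJ-4E(\uloggrad)+\tfrac{n-4}nJ\uloggrad.
\]
This is negative, so $F$ must dominate it. Since $\Theta_u\ge0$, that negative square is bounded by the positive part of $\Theta_u$, and we may use $\Theta_u\ge0$ to trade it. I would treat $\mathcal R$ as a quadratic form in the vector variables $(\nabla_gJ,E(\uloggrad),E^2(\uloggrad),\uloggrad)$, with coefficients depending on $J$, $s$ and $|E|^2$. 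For the $E$-quartic terms I would use \eqref{eq:case-AS-norm} in the form $|E(V)|^2\le\frac{n-1}n|E|^2|V|^2$, together with Cauchy--Schwarz bounds on $E^3(V,V)$ and $E^2(V,\nabla_gJ)$. The Bach and Weyl contributions are already cancelled in Proposition~\ref{prop:vector-field-identity}, so only Schouten algebra remains.

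\textbf{Step 3 (positivity of the reduced form).} Using $J>0$, $s>0$ and $n\ge6$, we verify that the reduced quadratic form is positive semidefinite. This amounts to checking that its diagonal coefficients are positive and that finitely many determinants, which are polynomials in $n$, $J$ and $s$, are nonnegative. It is natural to substitute $x=J/s$ and clear denominators to obtain polynomial inequalities in $(n,x)$ on $n\ge6$, $x>0$, and to factor out $(J-\frac n2)^2$ where $\mathbb T_n$ was designed to produce it.

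The main obstacle is Step 3, specifically the $(J-\frac n2)$-dependent terms coming from $\mathbb T_n$ and the background shift $\frac{(n-4)(n^2-4)(n^2-16)}{32n(n-1)}(J-\frac n2)(J^3-\frac{n^3}8)$. The latter is nonnegative because $(J-\frac n2)(J^3-\frac{n^3}8)\ge0$ for $J>0$. The difficulty is making sure the mixed terms $(J-\frac n2)\,s\,|E|^2$ and $(J-\frac n2)E(\uloggrad,\nabla_gJ)$ are absorbed. I would first try to write those contributions as $(J-\frac n2)^2$ times a positive expression plus a perfect square. If that fails, I would fall back on a discriminant check in the variable $J-\frac n2$.
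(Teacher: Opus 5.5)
There is a genuine gap, located where the hypothesis $\Theta_u\ge0$ must enter. In your remainder $\mathcal R$ the function $\Theta_u$ is subtracted, so the rational part of $12|E|^2\Theta_u$ enters $\mathcal R$ with a plus sign. $F$ does not need to dominate it, and bounding it by the polynomial part of $\Theta_u$ goes the wrong way. The hypothesis is needed in your Step~1 instead, and there your ansatz fails. Work in $F$-units and put $z=\Delta_gJ+4|E|^2$, $a=\frac{n^2+4n-8}{8(n-1)}$ and $c=\frac{(n+2)(3n-4)}{2(n-1)}$. The terms of $F-\lambda|E|^2\Theta_u$ that are quadratic in $(\Delta_gJ,|E|^2)$ sum to $az^2+(\lambda-c)|E|^2z$, which is positive semidefinite only for $\lambda=c$. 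Your choice $\lambda=\frac{3n}{2}$ (your $12$ after the factor $\frac8n$) leaves $az^2-\frac{5n-8}{2(n-1)}|E|^2z$, and subtracting the scalar square keeps this block indefinite. At a point where $\nabla_gu=0$ and $\nabla_gJ=0$, every other term of $\mathcal R$ is at most linear in $z$ and $|E|^2$. Taking $z$ a suitable positive multiple of $|E|^2\to\infty$ therefore sends $\mathcal R\to-\infty$; there $\Theta_u<0$, so the proposition itself is not contradicted. So a positive $(\Delta_gJ)^2$-coefficient in $\mathcal R$ is not enough, and no determinant check can succeed. The missing idea is the multiplier. Keep $c|E|^2\Theta_u$, which turns this block into the exact square $a(\Delta_gJ+4|E|^2)^2$, and discard the excess $\frac{5n-8}{2(n-1)}|E|^2\Theta_u$ only at the very end, as in \eqref{eq:coercive-coefficient-differences}.

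With $c|E|^2\Theta_u$ retained, the remainder must be nonnegative unconditionally. It then contains $\frac{2c}{n-4}\frac{u^2|E|^2}{u^2J+2+2|\nabla_gu|^2}\bigl|\nabla_gJ-4E(\uloggrad)+\frac{n-4}{n}J\uloggrad\bigr|^2$ with a positive sign. This is the only fourth-order term in $E$ that survives on $\{\Delta_gJ+4|E|^2=0\}$, so trading it away, as your Step~2 proposes, would destroy the argument. To see this, take $\nabla_gJ=0$, $\Delta_gJ=-4|E|^2$, and $\uloggrad$ an eigenvector of $E$ with eigenvalue $\epsilon|E|$, where $0<\epsilon<2^{-1/2}$. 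On this configuration the polynomial part of the remainder is $-8n\epsilon(1-2\epsilon^2)|E|^3|\uloggrad|^2+O(|E|^2)$, coming from $16nE^3(\uloggrad,\uloggrad)-8n|E|^2E(\uloggrad,\uloggrad)$. Cauchy--Schwarz bounds on the cubic contractions cannot repair this. The paper instead uses the two shifted $\Ac_E$-squares \eqref{eq:coercive-two-squares}. Their prefactor carries the same denominator, and their unshifted parts cancel the rational term via $|\Ac_E(V)|^2+\frac{2n}{n-1}|E(V)|^2=2|E|^2|V|^2$. Their shifts are proportional to $J+\frac{2(1+|\nabla_gu|^2)}{u^2}$, so every cross term is polynomial. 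These cross terms reproduce exactly the coefficients $\frac{(n-4)(n-2)}{2(n-1)}$, $16n$, $-\frac{2(3n^2-8n+8)}{n-1}$ and $2n$ of $E(\nabla_gJ,\nabla_gJ)$, $E^3(\uloggrad,\uloggrad)$, $E^2(\uloggrad,\nabla_gJ)$ and $|E|^2\ip{\uloggrad}{\nabla_gJ}$. Only after this, the scalar completion \eqref{eq:scalar-completion-exact}, and the absorption \eqref{eq:new-absorption} does a finite check like your Step~3 apply, namely to the weighted groups and the $2\times2$ matrix \eqref{eq:coercive-matrix-coefficients}. As stated, your Step~3 cannot be carried out. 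The remainder is not a quadratic form with coefficients in $n$, $J$, $s$ alone, because $|E|^2$, $u^{-2}$ (independently of $|\uloggrad|^2$), and the action of $E$ on vectors all enter.
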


\begin{proof}
By \eqref{eq:scale-Hessian} and \eqref{eq:u-scale-gradient},
\begin{align}
\frac{\Delta_gu}{u}&=\frac{n(1+|\nabla_gu|^2)}{2u^2}-J,\notag\\
\nabla_g\left(\frac{1+|\nabla_gu|^2}{u^2}\right)
 &=-2E(\uloggrad)
   -\left[\frac{2J}{n}+\frac{1+|\nabla_gu|^2}{u^2}\right]\uloggrad.
\label{eq:coercive-local-scale}
\end{align}
Expanding \eqref{u:eq:tensor} with \eqref{eq:weighted-basic} gives
\begin{align}
\Dw(\mathbb T_n(\uloggrad))
 ={}&\frac{n-4}{n-1}\Biggl\{
 \left(J-\frac n2\right)
 \left[\frac{(n-6)(n+8)}{12}J
       +\frac{n(n-1)(1+|\nabla_gu|^2)}{8u^2}\right]|E|^2\notag\\
&-\left[\frac{(n-6)(n+8)}{12}\left(2J-\frac n2\right)
       +\frac{n(n-1)(1+|\nabla_gu|^2)}{8u^2}\right]
                    E(\uloggrad,\nabla_gJ)\notag\\
&+\frac{n(n-1)}4\left(J-\frac n2\right)|E(\uloggrad)|^2\notag\\
&+\left(J-\frac n2\right)\left[-\frac{n^2-8n+6}{16}J
       +\frac{n(n-1)(1+|\nabla_gu|^2)}{8u^2}
       +\frac{n(n^2-4n+2)}{32}\right]\notag\\*
&\hspace{1cm}\times E(\uloggrad,\uloggrad)\notag\\
&+\left(J-\frac n2\right)
 \left[\frac{n^2-16n+96}{48}J
       -\frac{n^2(1+|\nabla_gu|^2)}{16u^2}\right]
                       \ip{\uloggrad}{\nabla_gJ}\notag\\
&-\frac{n^2-4n+2}{32}\left(J-\frac n2\right)^2
 \left[\frac{2J}{n}+\frac{1+|\nabla_gu|^2}{u^2}\right]|\uloggrad|^2\notag\\
&+\left(J-\frac n2\right)^2
 \Biggl[\frac{(n^2-4n+2)(1+|\nabla_gu|^2)}{32u^2}\notag\\*
&\qquad+\left(\frac{n(n-4)}{48}
       +\frac{(n-2)(n-6)(n+8)}{72n}\right)\left(J+\frac n4\right)\Biggr]\notag\\*
&\hspace{1cm}\times
     \left[\frac{n(1+|\nabla_gu|^2)}{2u^2}-J\right]\Biggr\}.
\label{eq:coercive-tensor-divergence}
\end{align}

\par\smallskip\noindent\textit{Square completion.}
First retain the following part of \eqref{u:eq:scalar-function}:
\begin{equation}\label{eq:coercive-retained}
 \frac{(n+2)(3n-4)}{2(n-1)}|E|^2\Theta_u
 +\frac{n^2+4n-8}{8(n-1)u^2}
 \left[(1+|\nabla_gu|^2)|\nabla_gJ|^2
                   -\ip{\nabla_gu}{\nabla_gJ}^2\right].
\end{equation}
Substitute \eqref{eq:coercive-tensor-divergence} into \eqref{u:eq:scalar-function},
use \eqref{eq:coercive-local-scale}, and subtract
\eqref{eq:coercive-retained}. Leaving the rational norm term from
\eqref{eq:Theta} unexpanded, we obtain the following coefficients for
the remaining polynomial terms:
\[
\begin{array}{c@{\qquad}l}
 \text{term}&\text{coefficient}\\[5pt]
 |E|^2\Delta_gJ
 &-\dfrac{(n-4)(n-2)}{2(n-1)}
   +\dfrac{(n+2)(3n-4)}{2(n-1)}
   =\dfrac{n^2+4n-8}{n-1}\\[7pt]
 |E|^4
 &-4n+\dfrac{2(n+2)(3n-4)}{n-1}
   =\dfrac{2(n^2+4n-8)}{n-1}\\[7pt]
 |E|^2\ip{\uloggrad}{\nabla_gJ}
 &4n-\dfrac{(n+2)(3n-4)}{n-1}
   =\dfrac{(n-4)(n-2)}{n-1}.
\end{array}
\]
The first two rows, together with the coefficients of $(\Delta_gJ)^2$,
$(\Delta_gJ)\ip{\uloggrad}{\nabla_gJ}$ and
$\ip{\uloggrad}{\nabla_gJ}^2$, match the quadratic term in
\eqref{eq:scalar-quadratic-before-completion}, where
$f=\Delta_gJ+4|E|^2$. This term contributes
$-\frac{n^2+4n-8}{n-1}$ to the third row. The tensor cross term must
therefore supply the remaining coefficient $2n$, since
\[
 \frac{(n-4)(n-2)}{n-1}
       +\frac{n^2+4n-8}{n-1}=2n.
\]
The tensor squares are constructed from the norm identity
\eqref{eq:case-AS-norm} and its polarization:
\begin{align*}
 |\Ac_E(V)|^2+\frac{2n}{n-1}|E(V)|^2
   &=2|E|^2|V|^2,\\
 \ip{\Ac_E(V)}{\Ac_E(W)}
   &=2|E|^2\ip VW-\frac{2n}{n-1}\ip{E(V)}{E(W)}.
\end{align*}
Use $\nabla_gJ-4E(\uloggrad)+\frac{n-4}{n}J\uloggrad$
as the common unshifted argument. With the prefactor below, the
unshifted terms cancel \eqref{eq:tensor-denominator-balance}.
Shift the first argument by
\[
 \frac{n-4}{3}\left[J+\frac{2(1+|\nabla_gu|^2)}{u^2}\right]\uloggrad.
\]
The common factor $J+\frac{2(1+|\nabla_gu|^2)}{u^2}$ cancels the
denominator, giving the required cross coefficient
$\frac{3n}{2(n-4)}\cdot4\cdot\frac{n-4}{3}=2n$.
The coefficient of $J\uloggrad$ becomes
$\frac{n-4}{n}+\frac{n-4}{3}=\frac{(n-4)(n+3)}{3n}$.

For the second square, the combined mixed contraction has prefactor
$\frac{2n^2}{n-1}$, and the vector paired with the image of the
unshifted argument under $E$ is chosen to be
\[
 \frac{(n-4)(n-2)}{4n^2}\nabla_gJ
                 -\frac{2(n-1)}nE(\uloggrad).
\]
These coefficients match the $E(\nabla_gJ,\nabla_gJ)$ and
$E^3(\uloggrad,\uloggrad)$ terms in \eqref{u:eq:scalar-function}, respectively:
\[
 \frac{2n^2}{n-1}\frac{(n-4)(n-2)}{4n^2}
       =\frac{(n-4)(n-2)}{2(n-1)},\qquad
 \frac{2n^2}{n-1}(-4)\left(-\frac{2(n-1)}n\right)=16n.
\]
After factoring $\frac{n-4}{3}
\left[J+\frac{2(1+|\nabla_gu|^2)}{u^2}\right]$ from the shifts,
the first square already contributes $-E(\uloggrad)$ by polarization.
The second shift must therefore have coefficient
$-\frac{n-2}{n}$ on $E(\uloggrad)$; it is
\begin{align*}
 &\frac{(n-4)(n-2)}{12n^2}
       \left[J+\frac{2(1+|\nabla_gu|^2)}{u^2}\right]\\*
 &\qquad\times\left[(n-4)\nabla_gJ-4nE(\uloggrad)\right].
\end{align*}
The $E^2(\uloggrad,\nabla_gJ)$ coefficient then agrees as well,
as expanded in \eqref{eq:cubic-contraction-expanded}.
These shifts give the tensor squares
\begin{align}
 &\frac{3nu^2}{2(n-4)(u^2J+2+2|\nabla_gu|^2)}\notag\\*
 &\quad\times\Biggl[\left|\Ac_E\left(\begin{aligned}
  &\nabla_gJ-4E(\uloggrad)\\
  &+\frac{n-4}{3}\left[\frac{n+3}{n}J+
         \frac{2(1+|\nabla_gu|^2)}{u^2}\right]\uloggrad
                    \end{aligned}\right)\right|^2\notag\\
 &\qquad+\frac{2n}{n-1}\left|\begin{aligned}
  &E\left(\nabla_gJ-4E(\uloggrad)+\frac{n-4}{n}J\uloggrad\right)\\
  &+\frac{(n-4)(n-2)}{12n^2}
       \left[J+\frac{2(1+|\nabla_gu|^2)}{u^2}\right]\\
  &\hspace{1cm}\times\left[(n-4)\nabla_gJ-4nE(\uloggrad)\right]
                    \end{aligned}\right|^2\Biggr].
                    \label{eq:coercive-two-squares}
\end{align}
The remaining norm term is
\begin{equation}\label{eq:coercive-unused}
 \frac{(5n-8)u^2|E|^2}{(n-1)(n-4)(u^2J+2+2|\nabla_gu|^2)}
 \left|\nabla_gJ-4E(\uloggrad)+\frac{n-4}{n}J\uloggrad\right|^2\ge0.
\end{equation}
The rational term in \eqref{eq:coercive-retained} combines with
\eqref{eq:coercive-unused} as
\begin{align}
&\frac{[-(n+2)(3n-4)+(5n-8)]u^2|E|^2}
 {(n-1)(n-4)(u^2J+2+2|\nabla_gu|^2)}\notag\\
&\quad\times
 \left|\nabla_gJ-4E(\uloggrad)+\frac{n-4}{n}J\uloggrad\right|^2\notag\\
={}&-\frac{3nu^2|E|^2}{(n-4)(u^2J+2+2|\nabla_gu|^2)}\notag\\
&\quad\times
 \left|\nabla_gJ-4E(\uloggrad)+\frac{n-4}{n}J\uloggrad\right|^2.
\label{eq:tensor-denominator-balance}
\end{align}
Adding \eqref{eq:tensor-denominator-balance} to
\eqref{eq:coercive-two-squares} and using \eqref{eq:case-AS-norm} gives
\begin{align}
&2n|E|^2\ip{\uloggrad}{\nabla_gJ}
       -8n|E|^2E(\uloggrad,\uloggrad)\notag\\
&+\frac{n-4}{3}\left[(n+6)J+
                  \frac{2n(1+|\nabla_gu|^2)}{u^2}\right]
                                    |E|^2|\uloggrad|^2\notag\\
&+\frac{2n^2}{n-1}\Biggl\langle
       E(\nabla_gJ)-4E^2(\uloggrad)
                 +\frac{n-4}{n}JE(\uloggrad),\notag\\*
&\hspace{2.5cm}\frac{(n-4)(n-2)}{4n^2}\nabla_gJ
                      -\frac{2(n-1)}nE(\uloggrad)\Biggr\rangle\notag\\
&+\frac{n^2(n-4)}{3(n-1)}
 \left[J+\frac{2(1+|\nabla_gu|^2)}{u^2}\right]
 \Biggl[\frac{(n-4)^2(n-2)^2}{16n^4}|\nabla_gJ|^2\notag\\*
&\qquad-\frac{(n-4)(n-2)^2}{2n^3}E(\uloggrad,\nabla_gJ)
                  -\frac{4(n-1)}{n^2}|E(\uloggrad)|^2\Biggr].
\label{eq:tensor-completion-expanded}
\end{align}
The contraction in \eqref{eq:tensor-completion-expanded} expands as
\begin{align}
&\frac{2n^2}{n-1}\Biggl\langle
       E(\nabla_gJ)-4E^2(\uloggrad)
                 +\frac{n-4}{n}JE(\uloggrad),\notag\\*
&\hspace{2.5cm}\frac{(n-4)(n-2)}{4n^2}\nabla_gJ
                      -\frac{2(n-1)}nE(\uloggrad)\Biggr\rangle\notag\\
={}&\frac{(n-4)(n-2)}{2(n-1)}E(\nabla_gJ,\nabla_gJ)\notag\\
&-\frac{2(3n^2-8n+8)}{n-1}E^2(\uloggrad,\nabla_gJ)
       +16nE^3(\uloggrad,\uloggrad)\notag\\
&+\frac{(n-4)^2(n-2)}{2n(n-1)}JE(\uloggrad,\nabla_gJ)
       -4(n-4)J|E(\uloggrad)|^2.
\label{eq:cubic-contraction-expanded}
\end{align}

To collect the remaining terms, rewrite \eqref{eq:case-AS-norm} as
\begin{equation}\label{eq:new-absorption}
 \frac{1+|\nabla_gu|^2}{u^2}|E|^2
  =\frac{|E|^2}{u^2}+\frac12|\Ac_E(\uloggrad)|^2
                     +\frac{n}{n-1}|E(\uloggrad)|^2.
\end{equation}
The pure $J^2|E|^2$, $J(\frac{n}{2})|E|^2$, and $(\frac{n}{2})^2|E|^2$
remainders cancel. The other contributions are collected below.

\par\smallskip\noindent\textit{The trace-free scalar terms.}
These are
\begin{align}
&\frac{n(n+2)}3\left[J+\frac{2(1+|\nabla_gu|^2)}{u^2}\right]\frac{|E|^2}{u^2}\notag\\
&\quad+\frac{n(n-4)}{24(n-1)}\left[
   (13n+17)J+\frac{32(n-1)(1+|\nabla_gu|^2)}{u^2}
       +\frac{3n(3n-7)}2\right]\notag\\
&\hspace{2.5cm}\times\left[\frac{|E|^2}{u^2}
                           +\frac12|\Ac_E(\uloggrad)|^2\right]\ge0.
\label{eq:coercive-trace-free}
\end{align}

\par\smallskip\noindent\textit{The scalar completion.}
The $\Delta_gJ$ terms after subtraction of
\eqref{eq:coercive-retained} agree with those of
\begin{align}
&\frac{n^2+4n-8}{8(n-1)}
              \left(f-\ip{\uloggrad}{\nabla_gJ}\right)^2\notag\\
&+\frac{n(n-4)}{48(n-1)}\left(J-\frac n2\right)
       \left[\frac{3n(1+|\nabla_gu|^2)}{u^2}-(5n-8)J\right]
              \left(f-\ip{\uloggrad}{\nabla_gJ}\right),
\label{eq:scalar-quadratic-before-completion}
\end{align}
where $f$ is defined in \eqref{eq:Green-weight}. We use
\begin{align}
\frac{n^2+4n-8}{8(n-1)}
     &=\frac n{128}+\frac{15n^2+65n-128}{128(n-1)},\notag\\
6\frac{\Delta_gu}{u}-(5n-14)J&=3n\frac{1+|\nabla_gu|^2}{u^2}-(5n-8)J.
\label{eq:scalar-coefficient-splitting}
\end{align}
We retain
\begin{equation}\label{eq:reserved-square}
 \frac n{128}\left(\Delta_gJ+4|E|^2
                  -\frac{\ip{\nabla_gu}{\nabla_gJ}}u\right)^2.
\end{equation}
The remaining quadratic coefficient is
$\frac{15n^2+65n-128}{128(n-1)}>0$.
The remaining scalar square is
\begin{align}
&\frac{15n^2+65n-128}{128(n-1)}
 \Biggl[\Delta_gJ+4|E|^2-\ip{\uloggrad}{\nabla_gJ}\notag\\
&\qquad+\frac{4n(n-4)(J-\frac{n}{2})}{3(15n^2+65n-128)}
    \left(\frac{3n(1+|\nabla_gu|^2)}{u^2}-(5n-8)J\right)\Biggr]^2.
\label{eq:remaining-scalar-square}
\end{align}
The completion is justified by the exact scalar identity
\begin{align}
&\frac{n^2+4n-8}{8(n-1)}
              \left(f-\ip{\uloggrad}{\nabla_gJ}\right)^2\notag\\
&+\frac{n(n-4)(J-\frac{n}{2})}{48(n-1)}
       \left[\frac{3n(1+|\nabla_gu|^2)}{u^2}-(5n-8)J\right]
              \left(f-\ip{\uloggrad}{\nabla_gJ}\right)\notag\\
={}&\frac n{128}\left(f-\ip{\uloggrad}{\nabla_gJ}\right)^2\notag\\
&+\frac{15n^2+65n-128}{128(n-1)}
  \Biggl[f-\ip{\uloggrad}{\nabla_gJ}\notag\\*
&\qquad+\frac{4n(n-4)(J-\frac{n}{2})}{3(15n^2+65n-128)}
       \left[\frac{3n(1+|\nabla_gu|^2)}{u^2}-(5n-8)J\right]\Biggr]^2\notag\\
&-\frac{n^2(n-4)^2(J-\frac{n}{2})^2}{72(n-1)(15n^2+65n-128)}
       \left[\frac{3n(1+|\nabla_gu|^2)}{u^2}-(5n-8)J\right]^2.
\label{eq:scalar-completion-exact}
\end{align}
Thus \eqref{eq:reserved-square} and \eqref{eq:remaining-scalar-square}
are retained. Combining the last term of
\eqref{eq:scalar-completion-exact} with the other scalar-defect terms
leaves $\frac{(n-4)(J-\frac{n}{2})^2}{n-1}$ times the sum of the following polynomial
and \eqref{eq:coercive-scalar-other}:
\begin{align}
&\left[\frac{(n^2-4)(n^2-16)}{32n}
 -\frac{n(n-4)}{48}-\frac{(n-2)(n-6)(n+8)}{72n}\right.\notag\\
&\hspace{1.1cm}\left.
 -\frac{n^2(n-4)(5n-8)^2}{72(15n^2+65n-128)}\right]J^2\notag\\
&+\left[\frac{n^3(n-4)(5n-8)}{12(15n^2+65n-128)}
                 -\frac{n^2-4n+2}{32}\right]J\frac{1+|\nabla_gu|^2}{u^2}\notag\\
&+\frac{(n-6)(9n^3+49n^2+126n+860)+5544}{576}J\notag\\
&+\frac{(n-6)\{n^2(7n^2+34n-214)+267n-66\}+116}
       {64(15n^2+65n-128)}\frac{(1+|\nabla_gu|^2)^2}{u^4}.
\label{eq:coercive-scalar-coefficients}
\end{align}
The other contributions to the factored remainder are
\begin{align}
&\frac{n\{(n-6)(5n^2+9n-14)+90\}}{1152}
                 \frac{1+|\nabla_gu|^2}{u^2}
 +\frac{n(n^2-4)(n^2-16)}{128}\notag\\
&\quad+\frac1{u^2}\Biggl[
    \left(\frac{n^2(n-4)}{96}
         +\frac{(n-2)(n-6)(n+8)}{144}\right)J\notag\\
&\hspace{2.5cm}+\frac{(3n^2-9n+4)(1+|\nabla_gu|^2)}{64u^2}
         +\frac{n(n^2-4n+2)}{128}\Biggr],
\label{eq:coercive-scalar-other}
\end{align}
and are positive. For $n\ge6$, the elementary bounds
$15n^2\le15n^2+65n-128\le24n^2$ and $3n\le5n-8\le5n$ show that
the first two coefficients in \eqref{eq:coercive-scalar-coefficients} are
bounded below, respectively, by
\begin{equation}\label{eq:coercive-scalar-bounds}
 \frac{(n-6)(7n^3+92n^2+84n+1128)+7344}{864n}>0,\qquad
 \frac{(n-6)(n^2-n+6)+30}{96}>0.
\end{equation}
The third coefficient is positive as displayed. The fourth is positive
because $7n^2+34n-214>0$ and $267n-66>0$ for $n\ge6$.
Together with \eqref{eq:coercive-scalar-other}, this proves that
\eqref{eq:reserved-square} is retained after the scalar completion
\eqref{eq:remaining-scalar-square}.

\par\smallskip\noindent\textit{The terms weighted by $J$.}
After removing $J$, these terms are
\begin{align}
&\left[\frac14+\frac{23(n-4)}{144}
           +\frac{7(n-4)^2}{144n(n-1)}\right]|3\nabla_gJ-8E(\uloggrad)|^2\notag\\
&\quad+\frac1{32}\left|(n-4)E(\uloggrad)
       -\frac{n^2-8n+6}{n-1}(J-\frac{n}{2})\uloggrad\right|^2\notag\\
&\quad+\left[\frac{73(n-4)^2}{96}
     +\frac{5(n-4)\{n(n-3)+20\}}{72(n-1)^2}
     +\frac{112(n-4)}{9n(n-1)}\right]|E(\uloggrad)|^2\notag\\
&\quad+\left[\frac{(n-4)^2}{48}+\frac1{54}+\frac3{2(n-1)}
     +(n-6)\left\{\frac14+\frac{17n+48}{54n^2}\right\}\right]|\nabla_gJ|^2\notag\\
&\quad+\Biggl[\frac{7(n-4)}{4n(n-1)}
     +\frac{(n-4)(n-6)\{n(5n^2+9n-23)+90\}}{288n(n-1)}\notag\\*
&\hspace{2.5cm}+\frac{(3n-2)\{(n-6)(2n-1)+4\}}{32(n-1)^2}\Biggr]
                  (J-\frac{n}{2})^2|\uloggrad|^2.
\label{eq:coercive-J-terms}
\end{align}
All coefficients are nonnegative for $n\ge6$.

\par\smallskip\noindent\textit{The terms weighted by $\frac{n(1+|\nabla_gu|^2)}{2u^2}$.}
First complete the square
\begin{equation}\label{eq:fixed-scale-square}
 \frac{n-4}{32}
       \left|(J-\frac{n}{2})\uloggrad+4E(\uloggrad)\right|^2.
\end{equation}
The remaining quadratic form is
\begin{align}
&\left[\frac n6+\frac74+\frac{15}{4(n-1)}
       -\frac{46}{3n}+\frac{16}{n^2}-\frac{64}{3n^3}\right]
                                                      |\nabla_gJ|^2\notag\\
&+2\left[\frac{5n}{24}-\frac{67}{6}+\frac3{n-1}
                       +\frac{80}{3n}-\frac{64}{3n^2}\right]
                                  E(\uloggrad,\nabla_gJ)\notag\\
&+\left[\frac{13n}{6}+\frac{46}{3}-\frac8{n-1}-\frac{64}{3n}\right]
                                                      |E(\uloggrad)|^2.
\label{eq:coercive-matrix-coefficients}
\end{align}
\needspace{11\baselineskip}
The coefficient matrix in \eqref{eq:coercive-matrix-coefficients}
is positive definite. At $n=6$, its leading principal minors are
\begin{equation}\label{eq:coercive-matrix-six}
 \frac{209}{162}>0,\qquad \frac{1231}{32400}>0.
\end{equation}
For real $n\ge6$, direct differentiation gives diagonal entries of
the derivative matrix greater than $\frac16$ and $\frac{13}{6}$,
and an off-diagonal entry of absolute value less than $\frac12$.
Its determinant is therefore greater than
\begin{equation}\label{eq:coercive-matrix-derivative}
 \frac16\frac{13}{6}-\left(\frac12\right)^2=\frac19>0.
\end{equation}
The derivative matrix is positive definite, so integration from $6$ to $n$
proves the assertion.

\par\smallskip\noindent\textit{The terms weighted by $\frac{n}{2}$.}
After removing the factor $\frac{n}{2}$, these terms are
\begin{align}
&\frac{n-4}{96(n-1)}\Biggl[
   \frac32(n^2-4n+2)|(J-\frac{n}{2})\uloggrad+2E(\uloggrad)|^2\notag\\
&\qquad+8n|2\nabla_gJ-E(\uloggrad)|^2
         +4n(n-6)|\nabla_gJ+E(\uloggrad)|^2\notag\\
&\qquad+\left\{2(n-6)(n+14)+\frac{12(9n-13)}{n-1}\right\}
                  |E(\uloggrad)|^2\Biggr],
\label{eq:coercive-constant-terms}
\end{align}
which are nonnegative for all $n\ge6$.

Collecting the terms in \eqref{u:eq:scalar-function} using
\eqref{eq:coercive-tensor-divergence},
\eqref{eq:tensor-completion-expanded},
\eqref{eq:cubic-contraction-expanded},
\eqref{eq:scalar-completion-exact}, and \eqref{eq:new-absorption},
we obtain the exact decomposition below. Here $[\eqref{eq:coercive-retained}]$
denotes the expression in the referenced display, with any inequality
omitted, and the same convention applies to the other brackets:
\begin{align*}
F={}&[\eqref{eq:coercive-retained}]+[\eqref{eq:reserved-square}]\\
&+[\eqref{eq:coercive-two-squares}]+[\eqref{eq:coercive-unused}]
  +[\eqref{eq:coercive-trace-free}]+[\eqref{eq:remaining-scalar-square}]\\
&+\frac{(n-4)(J-\frac{n}{2})^2}{n-1}
  \bigl([\eqref{eq:coercive-scalar-coefficients}]
       +[\eqref{eq:coercive-scalar-other}]\bigr)\\
&+J[\eqref{eq:coercive-J-terms}]\\
&+\frac{n(1+|\nabla_gu|^2)}{2u^2}
  \bigl([\eqref{eq:fixed-scale-square}]
       +[\eqref{eq:coercive-matrix-coefficients}]\bigr)
  +\frac n2[\eqref{eq:coercive-constant-terms}].
\end{align*}
The first line contains the terms to be retained. Each expression in the
remaining lines is nonnegative by the preceding arguments, and all its
weights are nonnegative for $n\ge6$ and $J>0$. Discarding these lines gives
\begin{align}
F\ge{}&\frac{(n+2)(3n-4)}{2(n-1)}|E|^2\Theta_u\notag\\*
&+\frac{n^2+4n-8}{8(n-1)u^2}
 \left[(1+|\nabla_gu|^2)|\nabla_gJ|^2
                   -\ip{\nabla_gu}{\nabla_gJ}^2\right]\notag\\*
&+\frac n{128}\left(\Delta_gJ+4|E|^2
                   -\ip{\uloggrad}{\nabla_gJ}\right)^2.
\label{eq:coercive-exact-sum}
\end{align}

Finally, Cauchy--Schwarz gives
\begin{equation}\label{eq:coercive-Cauchy-Schwarz}
 (1+|\nabla_gu|^2)|\nabla_gJ|^2
                -\ip{\nabla_gu}{\nabla_gJ}^2\ge|\nabla_gJ|^2.
\end{equation}
The coefficient differences in \eqref{eq:coercive-retained} are
\begin{equation}\label{eq:coercive-coefficient-differences}
 \frac{(n+2)(3n-4)}{2(n-1)}-\frac{3n}2
       =\frac{5n-8}{2(n-1)}>0,\qquad
 \frac{n^2+4n-8}{8(n-1)}-\frac n8
       =\frac{5n-8}{8(n-1)}>0.
\end{equation}
Since $|E|^2\Theta_u\ge0$, \eqref{eq:coercive-Cauchy-Schwarz} and
\eqref{eq:coercive-coefficient-differences} applied to
\eqref{eq:coercive-exact-sum} give
\begin{align*}
F\ge{}&\frac{3n}{2}|E|^2\Theta_u+\frac n{8u^2}|\nabla_gJ|^2\\
&+\frac n{128}\left(\Delta_gJ+4|E|^2
                  -\frac{\ip{\nabla_gu}{\nabla_gJ}}u\right)^2.
\end{align*}
Multiplication by $\frac{8}{n}$ proves \eqref{u:eq:collection}.
\end{proof}

\begin{proof}[Completion of the proof of Theorem~\ref{thm:obata}]
Discard the scalar square in \eqref{u:eq:collection} to obtain
\eqref{u:eq:coercivity}. The local source identity \eqref{u:eq:source}
was proved in Section~\ref{sec:construction}. Multiplication by $u$ and
integration give \eqref{eq:obata-integral}.
\end{proof}

\section*{Declaration}
The local rigidity result for the sixth-order $Q$-curvature equation,
proved using an Obata-type identity, was established in the first author's
doctoral dissertation (2024). In preparing the present manuscript, the authors
used ChatGPT (OpenAI) to simplify and check the complicated algebraic terms
arising from the Cotton and Bach tensors, primarily to verify the
computations in the proof of Proposition~\ref{u:prop:collection}.
The authors also used ChatGPT to assist with the typesetting of the manuscript.
The authors take full responsibility for the mathematical arguments and
the final manuscript.


\begin{thebibliography}{99}
\bibitem{Case}
J.~S. Case, \emph{The Obata--V\'etois argument and its applications},
J. Reine Angew. Math. \textbf{815} (2024), 23--40.
\href{https://doi.org/10.1515/crelle-2024-0048}{doi:10.1515/crelle-2024-0048}.

\bibitem{CaseGover}
J.~S. Case and A.~R. Gover, \emph{The GJMS operators in geometry, analysis and physics},
J. London Math. Soc. (2) \textbf{113} (2026), no.~1, e70375, 21~pp.
\href{https://doi.org/10.1112/jlms.70375}{doi:10.1112/jlms.70375}.

\bibitem{ChangYang}
S.-Y.~A. Chang and P.~C. Yang, \emph{On uniqueness of solutions of $n$-th order
differential equations in conformal geometry}, Math. Res. Lett. \textbf{4}
(1997), no.~1, 91--102.
\href{https://doi.org/10.4310/MRL.1997.v4.n1.a9}{doi:10.4310/MRL.1997.v4.n1.a9}.

\bibitem{ChenHou}
X.~Chen and F.~Hou, \emph{Remarks on GJMS operator of order six},
Pacific J. Math. \textbf{289} (2017), no.~1, 35--70.
\href{https://doi.org/10.2140/pjm.2017.289.35}{doi:10.2140/pjm.2017.289.35}.

\bibitem{GJMS}
C.~R. Graham, R.~Jenne, L.~J. Mason, and G.~A.~J. Sparling,
\emph{Conformally invariant powers of the Laplacian, I: Existence},
J. London Math. Soc. (2) \textbf{46} (1992), no.~3, 557--565.
\href{https://doi.org/10.1112/jlms/s2-46.3.557}{doi:10.1112/jlms/s2-46.3.557}.

\bibitem{Gursky1997}
M.~J. Gursky, \emph{Uniqueness of the functional determinant},
Comm. Math. Phys. \textbf{189} (1997), no.~3, 655--665.
\href{https://doi.org/10.1007/s002200050223}{doi:10.1007/s002200050223}.

\bibitem{GurskyMalchiodi}
M.~J. Gursky and A.~Malchiodi, \emph{A strong maximum principle for the
Paneitz operator and a non-local flow for the $Q$-curvature},
J. Eur. Math. Soc. \textbf{17} (2015), no.~9, 2137--2173.
\href{https://doi.org/10.4171/JEMS/553}{doi:10.4171/JEMS/553}.

\bibitem{HyderMartinazzi}
A.~Hyder and L.~Martinazzi, \emph{Conformal metrics on $\mathbb R^{2m}$ with
constant $Q$-curvature, prescribed volume and asymptotic behavior},
Discrete Contin. Dyn. Syst. \textbf{35} (2015), no.~1, 283--299.
\href{https://doi.org/10.3934/dcds.2015.35.283}{doi:10.3934/dcds.2015.35.283}.

\bibitem{LiWei}
M.~Li and J.~Wei, \emph{A remark on the Case--Gursky--V\'etois identity
and its applications}, Proc. Amer. Math. Soc. \textbf{153} (2025),
no.~8, 3417--3430.
\href{https://doi.org/10.1090/proc/17255}{doi:10.1090/proc/17255}.

\bibitem{MartinazziClassification}
L.~Martinazzi, \emph{Classification of solutions to the higher order
Liouville's equation on $\mathbb R^{2m}$}, Math. Z. \textbf{263} (2009),
no.~2, 307--329.
\href{https://doi.org/10.1007/s00209-008-0419-1}{doi:10.1007/s00209-008-0419-1}.

\bibitem{MartinazziVolume}
L.~Martinazzi, \emph{Conformal metrics on $\mathbb R^{2m}$ with constant
$Q$-curvature and large volume}, Ann. Inst. H. Poincar\'e C Anal.
Non Lin\'eaire \textbf{30} (2013), no.~6, 969--982.
\href{https://doi.org/10.1016/j.anihpc.2012.12.007}{doi:10.1016/j.anihpc.2012.12.007}.

\bibitem{Obata1971}
M.~Obata, \emph{The conjectures on conformal transformations of
Riemannian manifolds}, J. Differential Geom. \textbf{6} (1971),
no.~2, 247--258.
\href{https://doi.org/10.4310/jdg/1214430407}{doi:10.4310/jdg/1214430407}.

\bibitem{Vetois}
J.~V\'etois, \emph{Uniqueness of conformal metrics with constant
$Q$-curvature on closed Einstein manifolds}, Potential Anal.
\textbf{61} (2024), no.~3, 485--500.
\href{https://doi.org/10.1007/s11118-023-10117-1}{doi:10.1007/s11118-023-10117-1}.

\bibitem{WeiXu}
J.~Wei and X.~Xu, \emph{Classification of solutions of higher order
conformally invariant equations}, Math. Ann. \textbf{313} (1999), no.~2,
207--228.
\href{https://doi.org/10.1007/s002080050258}{doi:10.1007/s002080050258}.
\end{thebibliography}
\end{document}